\documentclass[11pt,a4paper]{article}
\usepackage{epsf,epsfig,amsfonts,amsgen,amsmath,amstext,amsbsy,amsopn,amsthm}
\usepackage{amsmath}
\usepackage{enumerate}
\usepackage{hhline}
\usepackage{multirow}
\usepackage{multicol}
\usepackage{booktabs}
\usepackage{lineno}
\usepackage{amsfonts,amsthm,amssymb,bm}
\usepackage{amsfonts}
\usepackage{graphics}
\usepackage{latexsym,bm}
\usepackage{amsfonts,amsthm,amssymb,bbding}
\usepackage{indentfirst}
\usepackage{graphicx}
\usepackage{color}
\usepackage[colorlinks=true,anchorcolor=blue,filecolor=blue,linkcolor=red,urlcolor=blue,citecolor=blue]{hyperref}
\usepackage{float}
\usepackage{tikz,enumerate}
\usetikzlibrary{calc}
\usepackage{geometry}
\newcommand{\n}{\noindent}

\newtheorem{thm}{Theorem}[section]

\newtheorem{lemma}[thm]{Lemma}

\newtheorem{problem}[thm]{Problem}

\newtheorem{proposition}[thm]{Proposition}

\renewcommand{\le}{\leqslant}
\renewcommand{\geq}{\geqslant}
\renewcommand{\ge}{\geqslant}

\begin{document}

\title{On a problem of Sivaraman and a problem of Gy\'arf\'as}

\author{
Kaiyang Lan\thanks{School of Mathematics and Statistics, Minnan Normal University, Zhangzhou, 363000, Fujian, China. Email: \url{kylan95@126.com}.} 
\and 
Wenlong Zhong\thanks{School of Mathematics and Statistics, Minnan Normal University, Zhangzhou, 363000, Fujian, China. Email: \url{2364810512@qq.com}.
} 
 }

\date{\today}
\maketitle



\begin{abstract}
	The \textit{girth} of a graph $G$, denoted $\mathrm{g}(G)$, is the length of a shortest cycle in $G$.
	If $G$ contains no cycle, we define $\mathrm{g}(G)=\infty$.
	Sivaraman (2020) asked for the optimal $\chi$-bounding function for the class of graphs whose complements have girth at least $6$.
	Let \(F(s) = \max\{\chi(G): \omega(G)\le s,\ \mathrm{g}(\overline{G})\ge 6\}\).
	We prove that there exists a constant \(c>0\) such that 
	\[
		c\left(\frac{s}{\log s}\right)^{4/3}
		\le
		F(s)
		\le
		(1+o(1))\frac{s^{3/2}}{\log s}.
	\]
	For small values, we establish the exact results
	\[
	F(1)=1,\; F(2)=2,\; F(3)=4,\; F(4)=5,\; F(5)=6,\; F(6)=8,
	\]
	and each bound is sharp.
	
	A graph $G$ is \emph{almost perfect} if every induced subgraph $H$ of $G$ satisfies
	\(\alpha(H)\omega(H)+1\ge |V(H)|\).
	Gy\'arf\'as (2023) asked whether almost perfect graphs are $\chi$-bounded by the function $g(x)=x+1$.
	We answer this question in the negative by showing that there is no constant $c$ such that every almost perfect
	graph $G$ satisfies $\chi(G)\le \omega(G)+c$.	
\end{abstract}

{\bf Keywords: girth, $\chi$-boundedness, almost perfect graph} 

{\bf 2020 AMS Subject Classifications: 05C15, 05C75} 

\section{Introduction}

All graphs in this note are finite, simple, and undirected.
The \emph{chromatic number} $\chi(G)$ of a graph $G$ is the minimum number of colors needed to color its vertices so that no two adjacent vertices share the same color.
A \emph{clique} (resp. an \emph{independent set}) in $G$ is a set of pairwise adjacent (resp. nonadjacent) vertices, and the \emph{clique number} $\omega(G)$ (resp. \emph{independence number} $\alpha(G)$) is the size of a largest clique (resp. independent set) in $G$.
It is clear that $\chi(G) \geq \omega(G)$, but in general $\chi(G)$ cannot be bounded from above by any function of $\omega(G)$; for instance, there are triangle-free graphs with arbitrarily large chromatic number~\cite{Erdos1959,Mycielski1955,Zykov1949}.
Empty induced subgraphs will not play any role; when they occur formally, we use the harmless
conventions $\alpha(\varnothing)=\omega(\varnothing)=\chi(\varnothing)=0$.

A graph class $\mathcal{F}$ is \emph{$\chi$-bounded}~\cite{Gyarfas1975} if there exists a function $f: \mathbb{N} \to \mathbb{N}$ such that for every $G \in \mathcal{F}$ and every induced subgraph $H$ of $G$, we have $\chi(H) \le f(\omega(H))$.
Such a function $f$ is called a \emph{$\chi$-bounding function} for $\mathcal{F}$.
The class $\mathcal{F}$ is \emph{polynomially $\chi$-bounded} if $f$ can be chosen to be bounded above by a polynomial function in $\omega(H)$; equivalently, there exists a polynomial function $p$ such that $\chi(H) \le p(\omega(H))$ for every $G \in \mathcal{F}$ and every induced subgraph $H$ of $G$.
Esperet~\cite{Esperet2017} conjectured that every $\chi$-bounded class is polynomially $\chi$-bounded. This conjecture was recently disproved by Bria\'nski, Davies, and Walczak~\cite{BrianskiDaviesWalczak2024}, who constructed $\chi$-bounded classes that are not polynomially $\chi$-bounded.

\subsection{The class of graphs whose complements have girth at least 6}

The \textit{girth} of a graph $G$, denoted $\mathrm{g}(G)$, is the length of a shortest cycle in $G$.
If $G$ contains no cycle, we define $\mathrm{g}(G)=\infty$.

The study of $\chi$-boundedness seeks to understand when the chromatic number of a graph can be controlled by its clique number. While the general question is hopeless (triangle-free graphs can have arbitrarily large chromatic number, see~\cite{Erdos1959,Mycielski1955,Zykov1949}), many natural graph classes have been shown to be $\chi$-bounded. One such class arises from a forbidden induced subgraph condition on the complement: graphs whose complements have large girth. Sivaraman \cite{Sivaraman2020} posed the following problem concerning the optimal $\chi$-bounding function for this class.

\begin{problem}[\cite{Sivaraman2020}]\label{problem11}
	What is the optimal $\chi$-bounding function for the class of graphs whose complements have girth at least $6$?
\end{problem}

For a fixed graph \(H\), we say that a graph \(G\) is \textit{\(H\)-free} if \(G\) has no induced subgraph isomorphic to \(H\).
More generally, for a family \(\mathcal{H}\) of graphs, \(G\) is \textit{\(\mathcal{H}\)-free} if it contains no induced subgraph isomorphic to any graph in \(\mathcal{H}\).
For a graph $G$ and a positive integer $k$, let $kG$ denote the disjoint union of $k$ copies of $G$.
Wagon~\cite{Wagon1980} proved that if $G$ is $2K_2$-free, then \(\chi(G) \le \binom{\omega(G)+1}{2}\).
For \(t\ge 3\), let \(C_t\) denote the cycle on \(t\) vertices.
We write $\overline G$ for the complement of \(G\).
Note that \(G\) is \(2K_2\)-free if and only if \(\overline G\) is \(C_4\)-free.
Since graphs whose complements have girth at least $6$ form a subclass of $2K_2$-free graphs, 
the class of graphs in Problem~\ref{problem11} has a quadratic $\chi$-bounding function.

The first main result of this paper provides a complete answer to Problem~\ref{problem11} for graphs with \(\omega(G)\le s\) for all \(1\le s\le 6\), and gives asymptotic bounds for the general case.
Our first main result is stated as follows:

\begin{thm}\label{thm:main}
	Let
	\[
	F(s)=\max\{\chi(G):\omega(G)\le s,\ \mathrm{g}(\overline G)\ge 6\}.
	\]
	Then:
	\begin{itemize}
		\item[(i)] There exists a constant \(c>0\) such that
		\[
		c\left(\frac{s}{\log s}\right)^{4/3}
		\le
		F(s)
		\le
		(1+o(1))\frac{s^{3/2}}{\log s}.
		\]
		\item[(ii)] The following exact values hold:
		\[
		F(1)=1,\; F(2)=2,\; F(3)=4,\; F(4)=5,\; F(5)=6,\; F(6)=8,
		\]
		and each bound is sharp.
	\end{itemize}
\end{thm}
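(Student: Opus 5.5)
Throughout I work in the complement $H=\overline G$, which has girth at least $6$. A clique of $G$ is an independent set of $H$, so $\omega(G)=\alpha(H)$. A proper colouring of $G$ is a partition of $V(H)$ into cliques of $H$. Since $H$ is triangle-free, these cliques are single vertices or edges, so $\chi(G)=|V(H)|-\nu(H)$, where $\nu$ is the matching number. Hence $F(s)$ is the maximum of $n-\nu(H)$ over graphs $H$ of girth at least $6$ with $\alpha(H)\le s$.

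\textbf{Upper bound.} Let $H$ be such a graph on $n$ vertices. By the Gallai--Edmonds / Tutte--Berge formula,
\[
n-\nu(H)=\tfrac12\bigl(n+\max_S(\mathrm{odd}(H-S)-|S|)\bigr).
\]
This is at most roughly $\tfrac12(n+\alpha(H))$, because choosing one vertex from each odd component of $H-S$ gives an independent set, so $\mathrm{odd}(H-S)\le\alpha(H)$. It therefore suffices to bound $n$ in terms of $s=\alpha(H)$ for girth-$\ge6$ graphs, i.e.\ a Ramsey-type bound for $\{C_3,C_4,C_5\}$-free graphs. For $C_4$-free and triangle-free graphs with independence number $s$ the known bound (Shearer-type, via locally sparse graphs) is $n\le(1+o(1))\sqrt{2}\,s^{3/2}/\sqrt{\log s}$-ish. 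I would instead aim directly for $n\le(2+o(1))s^{3/2}/\log s$, which gives $F(s)\le(1+o(1))s^{3/2}/\log s$ after halving. The route is as follows:
\begin{enumerate}
\item Every neighbourhood in $H$ is independent, so the maximum degree satisfies $\Delta\le s$.
\item Girth $\ge6$ makes second neighbourhoods large and $H$ very locally sparse.
\item Apply Shearer's bound $\alpha\ge (1+o(1))\, n\log d/d$ with average degree $d\le s$. This gives $n\le(1+o(1))s^2/\log s$, which is too weak.
\item To improve it, use that the second neighbourhood $N_2(v)$ of any vertex induces a graph in which the $\deg(v)$ sets $N(u)\setminus\{v\}$ are disjoint. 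Combine this with a greedy/nibble argument on $H$ minus a ball: pick $v$ of degree $d$, take $N(v)$ as part of the independent set, and recurse.
\end{enumerate}
I expect the needed $s^{3/2}/\log s$ to come from balancing these two estimates, $\alpha\ge n\log d/d$ and $\alpha\ge d$ together with the recursion, optimized at $d\approx\sqrt{n\log n}$. Matching the leading constant $1$ exactly is the main obstacle; I would try Shearer's $f(d)$ function with the precise constant and check that the Tutte--Berge halving is what provides the factor $1/2$.

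\textbf{Lower bound.} I need girth-$\ge6$ graphs with $\alpha\le s$ and $n-\nu\ge c(s/\log s)^{4/3}$; since $\nu\le n/2$, $n\ge 2c(s/\log s)^{4/3}$ suffices. The plan is to take a random polarity-type or random graph with all short cycles deleted. Take $G(N,p)$ with $p=N^{-3/4}$-ish so that the numbers of $C_3,C_4,C_5$ are $O(N^5p^5)$, which is at most $N/2$ for $p\approx N^{-4/5}$. After deleting a vertex from each short cycle, $\alpha\lesssim (\log N)/p\approx N^{4/5}\log N$. Setting $s=N^{4/5}\log N$ gives $N\approx (s/\log s)^{5/4}$, not $4/3$. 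To reach exponent $4/3$ I would instead invoke the known Ramsey lower bound $r(\{C_3,C_4,C_5\},K_s)\ge c(s/\log s)^{4/3}$, obtained via the triangle-free-type process or Kim's method / the $C_{\le 2k+1}$-free Ramsey results of Bohman--Keevash, with exponent $(2k)/(2k-1)$ at $k=2$. I would treat that result as the source and simply cite it.

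\textbf{Small values.} For the upper bounds I would use $\Delta(H)\le s$ together with the Tutte--Berge formula and case analysis on $\alpha(H)\le s$. The constructions are:
\begin{itemize}
\item $F(3)=4$: use $C_7$, which has $7-3=4$.
\item $F(4)=5$: find a girth-$6$ graph with $n-\nu=5$ and $\alpha=4$, e.g.\ $C_9$.
\item $F(5)=6$ and $F(6)=8$: use the Heawood graph or a variant. The Heawood graph has $\alpha=7$, so for $F(6)$ I would use a suitable girth-$6$ graph on about $15$--$16$ vertices with a deficient matching.
\end{itemize}
The matching upper bounds follow by exhaustive case analysis on the Gallai--Edmonds decomposition.
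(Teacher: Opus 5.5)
\textbf{There is a genuine gap in the upper bound of (i).} Your reduction $\chi(G)=n-\nu(H)\le\frac12(n+\alpha(H))$ and your lower bound for (i) match the paper: you cite the short-cycle Ramsey bound with exponent $4/3$ and use $\nu\le n/2$. The problem is your route to $n\le(2+o(1))s^{3/2}/\log s$. Balancing $\alpha\ge n\log d/d$ against $\alpha\ge d$ is just the triangle-free argument. At $d\approx\sqrt{n\log n}$ it returns only $s\gtrsim\sqrt{n\log n}$, i.e.\ again $n\lesssim s^2/\log s$. So the obstacle is the exponent, not the leading constant, and the greedy recursion you sketch does not repair it.

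The missing observation is that $N^2(v)$ is \emph{independent}. Take an edge $xy$ inside $N^2(v)$ with parents $x',y'\in N(v)$. If $x'=y'$ this gives a triangle, and otherwise it closes the $5$-cycle $vx'xyy'v$. Combined with the disjointness you already noted (no $C_4$), this gives $|N^2(v)|=\sum_{u\in N(v)}(d(u)-1)\le s$ for every $v$. Summing over $v$ gives $\sum_u d(u)(d(u)-1)\le ns$. By Cauchy--Schwarz the average degree then satisfies $d^2-d\le s$, so $d\le\sqrt s+1$. Shearer's function $f(d)\sim\log d/d$ is decreasing, so this upper bound on $d$ is the right direction. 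It yields $s\ge\alpha(H)\ge n f(\sqrt s+1)=(1+o(1))\,n\log s/(2\sqrt s)$, which is exactly $n\le(2+o(1))s^{3/2}/\log s$.

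\textbf{For (ii), the plan is also not a proof.} ``$\Delta(H)\le s$ plus exhaustive case analysis'' does not work, because nothing in your plan bounds $n$, so there is no finite range to analyse.

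The paper first establishes $R_6(3)=7$ (uniquely attained by $C_7$), $R_6(4)=9$, $R_6(5)\le12$ and $R_6(6)\le16$ by induction on $s$. Pick $v$ of small degree: every degree is at most $s$, and if $\delta(H)\ge3$ (resp.\ $\ge4$) then $N^2(v)$ is an independent set of size at least $6$ (resp.\ $12$). Since $H-N[v]$ has independence number at most $s-1$, this gives $n\le1+d(v)+R_6(s-1)$. The boundary cases $n=8$ for $s=3$ and $n=10$ for $s=4$ need a separate argument; the latter uses the uniqueness of $C_7$.

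In Tutte--Berge, one vertex per odd component is too weak. Each component of order $m$ must contribute $\lambda(m)$ to $\alpha(H)$, where $\lambda(m)$ is the independence number forced by these $R_6$ values. Minimizing $\sum\lambda(m_i)$ over all admissible component-order partitions, as in the paper's table, then forces $\alpha(H)\ge s+1$.

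Your sharpness examples for $s=5,6$ are also missing. The Heawood graph has $\alpha=7$ and $n-\nu=7$, so it serves neither case. The paper uses $H=C_{11}$ (giving $11-5=6$ with $\alpha=5$) and $H=2C_7$ (giving $14-6=8$ with $\alpha=6$). For $F(3)\le4$ the paper simply cites Gaspers--Huang on $(2K_2,K_4)$-free graphs.
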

	
\subsection{Almost perfect graphs}

A graph $G$ is \textit{perfect} if for every induced subgraph $H$ of $G$, we have $\chi(H)=\omega(H)$.
Perfect graphs are one of the most studied classes in graph theory, with deep structural results including the Strong Perfect Graph Theorem~\cite{chudnovsky2006}.
Lov\'asz~\cite{lovasz1972characterization} proved the following celebrated characterization of perfect graphs.

\begin{thm}[\cite{lovasz1972characterization}]\label{Lov\'asz}
	A graph $G$ is perfect if and only if every induced subgraph $H$ of $G$ satisfies
	\begin{equation}
		\alpha(H)\omega(H) \ge |V(H)|.
		\label{eq:perfect}
	\end{equation}
\end{thm}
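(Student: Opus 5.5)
The statement immediately above is Lov\'asz's theorem (Theorem~\ref{Lov\'asz}), and I would prove it by the short linear-algebra route due to Gasparian rather than through the replication lemma. The easy direction comes first. If $G$ is perfect and $H$ is an induced subgraph, then $\chi(H)=\omega(H)$. The colour classes of an optimal colouring are $\omega(H)$ independent sets covering $V(H)$, each of size at most $\alpha(H)$. Hence $|V(H)|\le \alpha(H)\omega(H)$, which is \eqref{eq:perfect}.

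For the converse I argue by contradiction. Assume every induced subgraph satisfies \eqref{eq:perfect} but $G$ is not perfect, and choose an induced subgraph $H$ that is imperfect and minimal with this property. Then every proper induced subgraph of $H$ is perfect, and $\chi(H)>\omega(H)$. Write $n=|V(H)|$, $\alpha=\alpha(H)$, $\omega=\omega(H)$. The goal is to show $n\ge \alpha\omega+1$, which contradicts \eqref{eq:perfect} applied to $H$ itself.

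The construction proceeds in four steps.
\begin{enumerate}
\item Fix a maximum independent set $A_0$ with $|A_0|=\alpha$. For each $a\in A_0$, the graph $H-a$ is perfect and has clique number at most $\omega$. It therefore partitions into $\omega$ independent sets. Together with $A_0$ this gives $\alpha\omega+1$ independent sets $A_0,A_1,\dots,A_{\alpha\omega}$.
\item For each $i$, the graph $H-A_i$ must have clique number exactly $\omega$. Otherwise $H-A_i$, being perfect, could be coloured with $\omega-1$ colours, and adding $A_i$ as one more class would give $\chi(H)\le\omega$, which is impossible. So I pick a clique $K_i$ of size $\omega$ disjoint from $A_i$.
\item Let $K$ be any clique of size $\omega$, and count how many of the $A_j$ it meets.
  \begin{itemize}
  \item If $K\cap A_0=\emptyset$, then for every $a\in A_0$ we have $a\notin K$. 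So $K$ meets each of the $\omega$ classes of the partition of $H-a$, exactly once each. Hence $K$ meets exactly $\alpha\omega$ of the sets.
  \item If $K\cap A_0=\{a\}$, then $K$ meets $A_0$ once. It meets every class belonging to $a'\neq a$, giving $(\alpha-1)\omega$ sets. In the partition of $H-a$, the clique $K-a$ has size $\omega-1$, so it meets all but exactly one of the $\omega$ classes. The total is again $1+(\alpha-1)\omega+\omega-1=\alpha\omega$.
  \end{itemize}
  Each clique meets an independent set at most once, so $K$ meets exactly $\alpha\omega$ of the sets, once each, and misses exactly one. Since $K_i\cap A_i=\emptyset$, the set missed by $K_i$ is precisely $A_i$.
\item Let $M$ be the $(\alpha\omega+1)\times n$ incidence matrix of the $A_i$, and $N$ that of the $K_i$. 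Step 3 gives $MN^{T}=J-I$, which is nonsingular. Its eigenvalues are $\alpha\omega$ and $-1$, none of them zero. Hence $\operatorname{rank}M=\alpha\omega+1$, so $n\ge\alpha\omega+1$, the desired contradiction.
\end{enumerate}

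I expect the main obstacle to be step 3, the exact intersection count. It requires careful bookkeeping of whether $K$ contains a vertex of $A_0$, and the case analysis must be checked so that each $K_i$ misses only $A_i$. The remaining steps are routine once minimality is used correctly in step 2.
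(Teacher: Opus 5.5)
Your proof is correct. It necessarily takes a different route from the paper, because the paper does not prove the hard direction at all. The theorem is quoted from Lov\'asz, and the only reasoning the paper gives is the necessity remark after the statement: a colouring of $H$ with $\chi(H)=\omega(H)$ classes, each of size at most $\alpha(H)$, covers $V(H)$. Your first paragraph reproduces exactly this.

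For sufficiency you use Gasparian's rank argument rather than Lov\'asz's original replication-based proof. Your route gives a short, self-contained proof. Beyond minimality of $H$, it needs only three things:
\begin{itemize}
\item each $H-A_i$ still contains an $\omega$-clique;
\item the exact intersection pattern $|A_i\cap K_j|=1-\delta_{ij}$;
\item the nonsingularity of $J-I$.
\end{itemize}

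It also connects to the paper's second topic. Since $H-v$ is perfect, you have the trivial bound $n-1\le\alpha(H-v)\omega(H-v)\le\alpha\omega$. Combined with your argument, this gives $n=\alpha\omega+1$ for every minimal imperfect graph. So minimal imperfect graphs attain equality in the almost-perfect inequality $\alpha(H)\omega(H)+1\ge|V(H)|$, as the paper's odd-cycle computation illustrates.

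The paper loses nothing by citing the result, since the theorem serves only as motivation and none of its proofs use the sufficiency direction.

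One optional tightening in Step 2: note that $\omega(H-a)=\omega$ for each $a\in A_0$, since otherwise $\chi(H)\le\chi(H-a)+1\le\omega$. Hence every colour class $A_i$ is nonempty, and $H-A_i$ really is a proper, and therefore perfect, induced subgraph. Your argument survives without this, since an empty $A_i$ would give $H-A_i=H$, but stating it removes the case.
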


The inequality~\eqref{eq:perfect} is necessary because in any graph $H$, a proper coloring partitions $V(H)$ into $\chi(H)$ independent sets, each of size at most $\alpha(H)$, so $|V(H)| \le \chi(H)\alpha(H)$.
For perfect graphs, $\chi(H)=\omega(H)$, yielding $|V(H)| \le \alpha(H)\omega(H)$.
The remarkable part of Theorem~\ref{Lov\'asz} is that this simple inequality is also sufficient for perfection.

A natural relaxation of perfect graphs, proposed by Gy\'arf\'as~\cite{Gyarfas2023}, is obtained by weakening the inequality in \eqref{eq:perfect} by one.
	A graph $G$ is \emph{almost perfect} if every induced subgraph $H$ of $G$ satisfies
	\begin{equation}
		\alpha(H)\omega(H) + 1 \ge |V(H)|.
		\label{eq:almostperfect}
	\end{equation}
In other words, almost perfect graphs are those whose induced subgraphs violate the perfectness inequality by at most one.
This definition captures graphs that are ``one step away'' from being perfect in the sense of Lov\'asz's characterization.

It is immediate that every perfect graph is almost perfect.
The converse is false; for example, odd cycles of length at least $5$ are almost perfect but not perfect.
Indeed, for $C_{2k+1}$ with $k\ge 2$, we have $\alpha(C_{2k+1})=k$, $\omega(C_{2k+1})=2$, and $|V(C_{2k+1})|=2k+1$, so $\alpha(C_{2k+1})\omega(C_{2k+1})+1=2k+1=|V(C_{2k+1})|$, while $\chi(C_{2k+1})=3>2=\omega(C_{2k+1})$.

Since almost perfect graphs are defined by a condition very close to that of perfect graphs, and perfect graphs are $\chi$-bounded by the identity function $f(x)=x$, it is natural to ask whether they inherit this strong property. The following question was posed by Gy\'arf\'as.

\begin{problem}[\cite{Gyarfas2023}]\label{p2}
	Are almost perfect graphs $\chi$-bounded by the function $g(x)=x+1$?
\end{problem}

Equivalently, the question asks whether every almost perfect graph $G$ satisfies $\chi(G) \le \omega(G)+1$.
Scott and Seymour \cite{scott2020induced} proved that almost perfect graphs are $\chi$-bounded (indeed, their result implies that they are $\theta$-bounded as well, see \cite{Gyarfas2023}), but their proof does not give the specific linear bound $x+1$.
The case $\omega(G)=2$ (or $\alpha(G)=2$) was confirmed by Gy\'arf\'as~\cite{Gyarfas2023} using Folkman's theorem: in that case, the almost perfect condition implies $\chi(G)\le 3=\omega(G)+1$.
Whether the $x+1$ bound holds for all almost perfect graphs remained open.

Our second result constructs, for every positive integer $m$, an almost perfect graph $G_m$ with $\chi(G_m)-\omega(G_m)=m$, thereby answering Problem~\ref{p2} in the negative.
In particular, the additive gap $\chi(G)-\omega(G)$ can be arbitrarily large among almost perfect graphs, so no constant $c$ can bound $\chi(G)$ by $\omega(G)+c$.

\begin{thm}\label{T2}
	For every positive integer $m$, there exists an almost perfect graph
	$G_m$ such that
	\[
	\chi(G_m)-\omega(G_m)=m.
	\]
	Consequently, there is no constant $c$ such that every almost perfect
	graph $G$ satisfies $\chi(G)\le \omega(G)+c$.
\end{thm}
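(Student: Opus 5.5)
The plan is to build $G_m$ explicitly as a highly structured graph in which $\alpha,\omega,\chi$ of every induced subgraph can be computed by counting, rather than argue abstractly. A quick check of the natural candidates shows the tension. For disjoint unions, $\alpha$ is additive while $\omega$ and $\chi$ are only maxima. For joins, $\omega$ and $\chi$ are additive while $\alpha$ is a maximum. So gluing copies of $C_5$ either way preserves the gap $\chi-\omega=1$ or violates \eqref{eq:almostperfect}. For example, $kC_5$ and the join of $k$ copies of $C_5$ both fail \eqref{eq:almostperfect} for $k\ge 2$. Line graphs are ruled out by Vizing, since their gap is at most $1$. Moreover, the case $\alpha=2$ is excluded by Gy\'arf\'as' result, so $G_m$ must have independence number at least $3$. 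I will therefore look for $G_m$ whose color classes are forced to be small, so that $\chi(G_m)\ge |V(G_m)|/\alpha(G_m)$ is close to tight, while $\omega(G_m)$ sits just below $(|V|-1)/\alpha$.

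Concretely, I will take $G_m=\overline{H}$ for a graph $H$ with $\omega(H)=3$ (so $\alpha(G_m)=3$), and then use the following dictionary:
\begin{itemize}
\item $\omega(G_m)=\alpha(H)$;
\item $\chi(G_m)$ is the minimum number of cliques of $H$ (of sizes $1,2,3$) covering $V(H)$;
\item \eqref{eq:almostperfect} for $G_m$ becomes, for every induced subgraph $H'$ of $H$, the inequality $\omega(H')\alpha(H')+1\ge |V(H')|$.
\end{itemize}
Choosing $H$ with few triangles but many vertices forces $\chi(G_m)$ to be roughly $|V|/2$. The requirement $\alpha(H)\approx |V|/3$ then gives a gap of order $|V|/6$. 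The first candidate I would try is a vertex-disjoint union, or a sparse "chain", of $m$ gadgets. Each gadget is a small triangle-containing graph $D$ with $\omega(D)\alpha(D)+1=|D|$ exactly (for instance the complement of $C_7$, or a $C_5$ with a pendant triangle). I will glue them so that $\alpha$ grows by exactly $\alpha(D)$ per gadget, while each gadget forces one extra clique in any clique cover. After that, I will adjust by adding a perfect part (a join with a clique) to make the gap exactly $m$ rather than at least $m$. This adjustment is harmless: joining $G$ with $K_t$ adds $t$ to both $\omega$ and $\chi$, and it preserves \eqref{eq:almostperfect}, because any induced subgraph meeting the clique has $\alpha$ unchanged or equal to $1$.

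The key steps, in order, are as follows.
\begin{enumerate}
\item Fix the gadget and the gluing rule, and compute $\alpha(H)$, $\omega(H)$ and the clique cover number of $H$. This gives $\omega(G_m)$ and $\chi(G_m)$, and the lower bound on $\chi$ comes from counting $|V|$ against the number of triangles available.
\item Verify \eqref{eq:almostperfect} for every induced subgraph. I would do this by induction on the number of gadgets met by $H'$: split $H'$ along the gluing into parts, use superadditivity of $\alpha$ across the parts, and use the gadget-level inequality with slack in all but at most one part.
\item Finish with the join-with-a-clique adjustment to get the gap equal to $m$ exactly.
\end{enumerate}

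The main obstacle is step 2. The inequality $\alpha\omega+1\ge |V|$ has only additive slack $1$, while $\omega$ of a union is a maximum rather than a sum. So any induced subgraph that meets many gadgets and contains a triangle in only one of them threatens to violate the inequality, exactly as happens for $kC_5$. My first attempt will be to make the gadgets "triangle-rich everywhere": every large induced subgraph of a gadget should either contain a triangle or have independence number at least half its size. This lets $\omega(H')=3$ be available to pay for all parts simultaneously. If plain disjoint unions fail, I will add edges between gadgets (keeping $H$ $K_4$-free) so that any two gadgets meeting $H'$ produce a triangle across them.
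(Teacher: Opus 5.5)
\textbf{There is a genuine gap.} The proposal never specifies $G_m$. Step 2, checking $\alpha(H')\omega(H')+1\ge |V(H')|$ for every induced $H'$, is the whole content of the theorem, and you leave it open with heuristics only. Moreover, the concrete plan cannot work as described.

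\begin{itemize}
\item \emph{Tight gadgets cannot be repeated.} You fix $\omega(H)=3$. Then any two vertex-disjoint tight triangle-containing gadgets $D_1,D_2$ (with $|D_i|=3\alpha(D_i)+1$) already violate the inequality on $H[V(D_1)\cup V(D_2)]$, \emph{whatever edges you put between them}. That graph has $3(\alpha(D_1)+\alpha(D_2))+2$ vertices, independence number at most $\alpha(D_1)+\alpha(D_2)$ (extra edges only lower $\alpha$), and clique number at most $3$. So $\overline{C_7}$ can be used at most once. Your other example, a $C_5$ with a pendant triangle, is not tight anyway: it has $\alpha=\omega=3$ but fewer than $10$ vertices.
\item \emph{Odd holes impose a global condition.} Any two vertex-disjoint odd holes of $H$ must span a triangle; otherwise their union is triangle-free with more than $2\alpha+1$ vertices. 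This applies to all pairs of odd holes, including those created by your cross edges. So it cannot be checked gadget by gadget.
\item \emph{``Few triangles'' caps the gap.} Suppose $X$ meets every triangle of $H$, i.e.\ every stable triple of $G_m$. Then $G_m-X$ is almost perfect with $\alpha\le 2$, so the case you quote gives $\chi(G_m)\le |X|+\chi(G_m-X)\le \omega(G_m)+|X|+1$. A large gap therefore forces a large triangle transversal.
\end{itemize}

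Nothing in the proposal indicates that an unbounded gap is compatible with $\alpha(G_m)=3$ at all. In the paper's examples, $\alpha(G_m)=m+1$ grows with $m$.

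\textbf{The missing idea: you discarded joins too early.} You correctly note that in a join $\omega$ and $\chi$ add while $\alpha$ is a maximum. The join of copies of $C_5$ fails only because all summands have the \emph{same} independence number. The paper takes $G_m=C_5+C_7+\cdots+C_{2m+3}$, odd cycles of pairwise distinct lengths $2k_i+1$, so $\chi(G_m)=3m$ and $\omega(G_m)=2m$.

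\begin{itemize}
\item Every induced subgraph is a join $A_1+\cdots+A_m$ with $A_i$ induced in $C_{2k_i+1}$. Write $n_i=|V(A_i)|$, $a_i=\alpha(A_i)$, $b_i=\omega(A_i)$ and $M=\max_i a_i$.
\item A proper induced subgraph of an odd cycle is a linear forest, so $n_i\le a_ib_i$.
\item A full cycle has $n_i=a_ib_i+1$, with $a_i=k_i$ and $b_i=2$.
\item So if $S$ is the set of full cycles present and $r=|S|$, the excess to be paid is $r$. The available slack satisfies $\sum_i (M-a_i)b_i+1\ge 2\sum_{i\in S}(M-k_i)+1\ge r(r-1)+1\ge r$. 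The middle inequality holds precisely because the $k_i$ with $i\in S$ are distinct.
\end{itemize}

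No adjustment by joining a clique is needed: the gap is exactly $m$.
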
	
	

	The rest of the paper is organized as follows.
	In Section~\ref{sec:tools}, we establish the small Ramsey-type estimates needed for the proof.
	In Section~\ref{part1}, we prove the asymptotic bounds in Theorem~\ref{thm:main}(i).
	In Section~\ref{part2}, we complete the proof of Theorem~\ref{thm:main} by establishing the exact small values in part (ii).
	Finally, in Section~\ref{T2proof}, we prove Theorem~\ref{T2}, which gives a negative answer to Problem~\ref{p2}.

In the remainder of this section, we describe notation and terminology used in the paper.
For a graph $G$, write $\delta(G)$ for its minimum degree, and $\alpha(G)$ for its independence number.
For any $S \subseteq V(G)$, $G[S]$ denotes the subgraph of $G$ induced by $S$.
For any $v \in V(G)$, $N_G(v)$ denotes the neighborhood of $v$ and $d_G(v) = |N_G(v)|$ is the degree of $v$ in $G$.
Let \(N_G[v]:=N_G(v)\cup \{v\}\).
For any positive integer $i$, let $N_G^i(v) := \{u \in
V(G) \setminus \{v\} : d_G(u, v) = i\}$, where $d_G(u, v)$ is the distance between $u$ and $v$ in $G$.
Then $N_G^1(v) = N_G(v)$ is the neighborhood of $v$ in $G$.
When \(G\) is clear from the context, we ignore the subscript \(G\).

\section{Tools}\label{sec:tools}

A \textit{matching} in a graph \(G\) is a set of pairwise vertex-disjoint edges. 
The \textit{maximum matching number} \(\nu(G)\) is the maximum size (number of edges) of a matching in \(G\).
We will need the following relation between the matching number of a triangle-free graph and the chromatic number of its complement.

\begin{lemma}\label{lem:coloring-matching}
	If $H$ is triangle-free, then \(\chi(\overline H)=|V(H)|-\nu(H)\).
\end{lemma}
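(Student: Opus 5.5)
Since $H$ is triangle-free, every independent set of $\overline H$ (that is, every clique of $H$) has at most two vertices. A proper colouring of $\overline H$ is therefore exactly a partition of $V(H)$ into colour classes of size one or two, where each class of size two is an edge of $H$. We will prove the two inequalities $\chi(\overline H)\le |V(H)|-\nu(H)$ and $\chi(\overline H)\ge |V(H)|-\nu(H)$ separately by translating between colourings of $\overline H$ and matchings of $H$.

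\emph{Upper bound.} Take a maximum matching $M$ of $H$, with $|M|=\nu(H)$.
\begin{itemize}
\item Give the two endpoints of each edge of $M$ a common colour; this is allowed because they are nonadjacent in $\overline H$.
\item Give every unmatched vertex its own new colour.
\end{itemize}
This is a proper colouring of $\overline H$. The number of colours is
\[
|M| + \bigl(|V(H)|-2|M|\bigr) = |V(H)|-\nu(H).
\]

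\emph{Lower bound.} Take an optimal colouring of $\overline H$ with $k=\chi(\overline H)$ classes. Each class is independent in $\overline H$, so it is a clique in $H$. Since $H$ is triangle-free, each class has size $1$ or $2$.
\begin{itemize}
\item Let $t$ be the number of classes of size $2$. Each such class is an edge of $H$, and distinct classes are disjoint, so these $t$ edges form a matching. Hence $t\le \nu(H)$.
\item Counting vertices gives $|V(H)| = 2t + (k-t) = k+t$.
\end{itemize}
Therefore $k = |V(H)| - t \ge |V(H)|-\nu(H)$.

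Combining the two bounds gives $\chi(\overline H)=|V(H)|-\nu(H)$. The only point that needs care is the observation that triangle-freeness limits colour classes of $\overline H$ to size at most two. Once that is noted, the two directions are routine counting, and I expect no genuine obstacle. The empty graph is covered trivially by the convention $\chi(\varnothing)=0$.
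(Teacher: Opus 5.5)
Your proof is correct and follows the paper's argument: both use triangle-freeness to make colour classes of $\overline H$ single vertices or edges of $H$, and then count $|V(H)|-m$ colours for $m$ two-vertex classes forming a matching. You split this into explicit upper and lower bounds, where the paper simply says that minimizing the number of colours is the same as maximizing the matching.
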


\begin{proof}
	A proper coloring of $\overline H$ is a partition of $V(H)$ into stable sets of $\overline H$.  These are exactly cliques of $H$.  Since $H$ is triangle-free, every clique of $H$ has size at most $2$.  Hence every color class of $\overline H$ is either a single vertex of $H$ or an edge of $H$.
	Therefore, a coloring of $\overline H$ with $t$ colors is the same as a partition of $V(H)$ into $t$ parts, each part being either one vertex or two adjacent vertices of $H$.  If $m$ of the parts have size $2$, then those $m$ parts are pairwise disjoint edges of $H$, hence a matching, and
	\[
	t=m+(|V(H)|-2m)=|V(H)|-m.
	\]
	Minimizing $t$ is therefore the same as maximizing $m$.
	Thus \(\chi(\overline H)=|V(H)|-\nu(H)\).
\end{proof}

\medskip

It is convenient to pass to the complement.  Put
\[
H=\overline G.
\]
Then
\[
\mathrm{g}(\overline G)\ge 6\quad\Longleftrightarrow\quad H\text{ is $\{C_3,C_4,C_5\}$-free},
\]
and
\[
\omega(G)=\alpha(H).
\]

For $s\ge 1$, we further define the \textit{girth-six Ramsey extremal number}
\[
R_6(s)=\max\{|V(H)|: \mathrm{g}(H)\ge 6,
\ \alpha(H)\le s\}.
\]
Equivalently,
\[
R_6(s)=R(\{C_3,C_4,C_5\},K_{s+1})-1.
\]

The exact small clique cases below require several small estimates for $R_6(s)$.

\begin{lemma}\label{lem:Rsmall}
	The following bounds hold:
	\[
	\begin{aligned}
		R_6(1)&=2, & R_6(2)&=4, & R_6(3)&=7, & R_6(4)&=9, \\
		R_6(5)&\le 12, & R_6(6)&\le 16.
	\end{aligned}
	\]
	Moreover, the equality cases $R_6(3)=7$ and $R_6(4)=9$ are realized by $C_7$ and $C_9$, respectively.
\end{lemma}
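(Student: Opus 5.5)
The plan is to compute upper bounds by induction on $s$, using the neighborhood structure of a girth-$\ge 6$ graph $H$ with $\alpha(H)\le s$. The key observation is that for any vertex $v$, the set $N(v)$ is independent (no triangles). Moreover, vertices in $N(v)$ have pairwise disjoint neighborhoods in $N^2(v)$ (no $C_4$). Finally, $N(v)\cup\{v\}$ has no edges to the rest except through $N(v)$.

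The recursion comes from deleting $N[v]$. Any independent set of $H-N[v]$ extends by $v$, so $\alpha(H-N[v])\le s-1$. This gives
\[
|V(H)|\le 1+d(v)+R_6(s-1),
\]
and since $N(v)$ is independent, $d(v)\le s$. Choosing $v$ of minimum degree $\delta$ gives $R_6(s)\le 1+\delta+R_6(s-1)$.

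I will sharpen this by bounding $\delta$.

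\begin{itemize}
\item If $\delta\le 2$, the recursion gives $R_6(s)\le R_6(s-1)+3$.
\item If $\delta\ge 3$, a girth-6 graph has at least $1+\delta+\delta(\delta-1)$ vertices in the ball of radius $2$. Moreover, $N^2(v)$ is independent-ish: two vertices in $N^2(v)$ with distinct parents cannot be adjacent, since that would create a $C_5$. Vertices with the same parent cannot be adjacent either, since that would create a triangle. So $N^2(v)$ is independent.
\end{itemize}

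In the second case, $\alpha\ge \delta(\delta-1)$. This forces $\delta(\delta-1)\le s$, which for $s\le 6$ means $\delta\le 3$.

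The small values then go as follows.

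\begin{itemize}
\item $R_6(1)=2$ is given by $K_2$; three vertices would need a triangle.
\item $R_6(2)=4$ is given by $P_4$. On five vertices, girth $\ge 6$ makes $H$ a forest, and a forest on $5$ vertices is bipartite, so $\alpha\ge 3$.
\item $R_6(3)\le 1+3+4=8$ by the recursion. To exclude $8$ vertices, I would note that $\delta\ge 3$ would give $\alpha\ge 6$. So $\delta\le 2$, giving $|V|\le 3+R_6(2)=7$. The bound is attained by $C_7$, which has $\alpha(C_7)=3$.
\item $R_6(4)\le 3+7=10$ by the same recursion. To get $9$, I must rule out $10$ vertices with $\alpha=4$. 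Equality in the recursion would require $d(v)=2$ for the min-degree vertex, with $H-N[v]$ an extremal graph on $7$ vertices with $\alpha=3$. I would need to show this graph is $C_7$ (uniqueness of the $R_6(3)$ extremal graph), and then check that $v$ with two neighbors attached to $C_7$ always yields an independent $5$-set or a short cycle. This is a finite case check.
\item $C_9$ shows $R_6(4)\ge 9$.
\item $R_6(5)\le 3+9=12$ follows directly.
\item $R_6(6)\le 3+12=15$ would also follow if $\delta\le 2$. For $\delta=3$ we only get $\delta(\delta-1)=6\le 6$, allowed. Then the bound is $1+3+R_6(5)=16$, which is the stated bound.
\end{itemize}

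The main obstacle is the $R_6(4)\le 9$ step. It needs uniqueness of $C_7$ as the extremal graph for $s=3$, and a careful case analysis of how a degree-$2$ vertex and its neighbors attach to $C_7$ without creating cycles of length $3,4,5$. I would first try parity arguments on the attachment positions along $C_7$ (neighbors of $N(v)$ must be at cycle distance $\ge 3$ apart). Then I would exhibit an independent set of size $5$ from $\{v\}$ plus a suitable $4$ of the cycle, or from $N(v)$ plus $3$ cycle vertices avoiding their attachments.

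If the $\delta\le 2$ reduction fails there, a fallback is a counting argument. In a triangle-free graph on $10$ vertices with every vertex degree $\ge 2$ and girth $\ge 6$, Moore-type bounds force structures that contain independent sets of size $5$.
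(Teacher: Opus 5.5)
Your framework is the paper's: take a minimum-degree vertex $v$, use that $N^2(v)$ is stable of size at least $\delta(\delta-1)$ to force $\delta\le 2$ (or $\delta\le 3$ when $s=6$), and recurse on $H-N[v]$. The cases $s\le 3$ are handled correctly; your forest argument for $R_6(2)$ and the direct $\delta$-bound for $R_6(3)$ are even a little more direct than the paper's. The gap is that $R_6(4)\le 9$ is never proved. You reduce to the right configuration ($d(v)=2$, and $H-N[v]$ has $7$ vertices and independence number at most $3$), then stop at ``this is a finite case check.'' That step is load-bearing, because $R_6(5)\le 12$ and $R_6(6)\le 16$ come from feeding it into the recursion. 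Without it your argument only gives $R_6(4)\le 10$, $R_6(5)\le 13$ and $R_6(6)\le 17$, so three of the six bounds are unproved as written.

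Two short facts close the gap; no parity argument or Moore-type fallback is needed.

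\emph{Uniqueness of $C_7$.} A $7$-vertex graph of girth at least $6$ with $\alpha\le 3$ is not a forest, since a bipartite graph on $7$ vertices has a side of size $4$. So it contains a $6$- or $7$-cycle.
A spanning $7$-cycle admits no chord, since a chord would close a $C_3$ or $C_4$; hence the graph is $C_7$.
A $6$-cycle is induced for the same reason. The seventh vertex $x$ has at most one neighbor on it, since two neighbors would close a cycle of length at most $5$. So one of the two colour classes of the $C_6$ avoids $N(x)$, and together with $x$ it is a stable $4$-set, a contradiction.

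\emph{Attachments of $N(v)$.} Write $N(v)=\{a,b\}$. Each of $a,b$ has at most one neighbor on the $C_7$, because any two vertices of $C_7$ are at cycle distance at most $3$ and would close a cycle of length at most $5$ through $a$. Your heuristic that attachments must be ``at cycle distance $\ge 3$'' misses this, since distance $3$ already yields a $C_5$.
Deleting these at most two vertices from the $C_7$ still leaves a stable $3$-set. Together with $a$ and $b$ (nonadjacent, since $H$ is triangle-free) this is a stable $5$-set, contradicting $\alpha(H)\le 4$. Your other option, $v$ plus four cycle vertices, cannot work, since $\alpha(C_7)=3$.
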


\begin{proof}
	We prove each estimate carefully.
	
	First, $R_6(1)=2$.  If $\alpha(H)\le1$, then every two vertices are adjacent.  Since $H$ is triangle-free, this gives $|V(H)|\le2$, and $K_2$ shows equality.
	
	Next, $R_6(2)=4$.
	Let $H$ have girth at least six and $\alpha(H)\le2$.
	Choose any vertex $v$.
	Since $H$ is triangle-free, $N_H(v)$ is a stable set, so \(d_H(v) \le2\).
	Let \(A=V(H)\setminus N_H[v]\).
	If $A$ contained two nonadjacent vertices $x,y$, then $\{v,x,y\}$ would be a stable set of size three, contradicting $\alpha(H)\le2$.
	Hence $A$ is a clique.
	Since $H$ is triangle-free, $|A|\le2$.  Thus
	\[
	|V(H)|=1+d_H(v)+|A|\le1+2+2=5.
	\]
	If equality held, then every vertex would have degree two, and the graph would be a \(C_5\).
	This is impossible because the girth is at least six.
	Hence $|V(H)|\le4$.
	The graph $2K_2$ has four vertices, girth infinity, and independence number two, so $R_6(2)=4$.
	
	Now prove $R_6(3)=7$.
	Let $H$ have girth at least six and $\alpha(H)\le3$.
	For any vertex $v$, $N_H(v)$ is stable, so $d_H(v)\le3$.
	Let \(A=V(H)\setminus N_H[v]\).
	We have $\alpha(H[A])\le2$, because a stable set of size three in $A$ together with $v$ would give a stable set of size four in $H$.
	By the already proved $R_6(2)=4$, we get $|A|\le4$.  Hence
	\[
	|V(H)|\le1+3+4=8.
	\]
	If $|V(H)|=8$, then no vertex can have degree at most two; otherwise the same argument gives
	\[
	|V(H)|\le1+2+4=7.
	\]
	Thus $\delta(H)\ge3$.
	Since $N(v)$ is stable and $\alpha(H)\le3$, also $d_H(v)\le3$ for every $v$, and therefore $H$ is 3-regular.
	But in a 3-regular graph of girth at least six, the set \(N_H^2(v)\) (for any vertex \(v\in V(H)\)) has at least
	\[
	1+3+3\cdot2=10
	\]
	distinct vertices: the vertices in \(N_H(v)\) are distinct and nonadjacent, and no two vertices in \(N_H(v)\) have a common neighbor in \(N_H^2(v)\), otherwise there is a $C_4$.
	This contradicts $|V(H)|=8$.
	Hence $|V(H)|\le7$.
	The graph $C_7$ has girth seven and independence number three, so $R_6(3)=7$.
	
	It will be useful to note the equality case for $R_6(3)$.
	If $|V(H)|=7$ and $\alpha(H)\le3$, then $H$ is not a forest, because every forest is bipartite and one side of a bipartition of seven vertices has size at least four.
	Hence $H$ has a cycle.
	The girth is at least six, so the cycle has length six or seven.
	If there is a $C_7$, then it uses all vertices and has no chord, so $H=C_7$.
	If there is a $C_6$, let $x$ be the seventh vertex of \(H\).
	The vertex $x$ has at most one neighbor on that $C_6$; otherwise two neighbors on the $C_6$ have distance one, two, or three along the cycle, producing a $C_3$, $C_4$, or $C_5$.
	Then one can choose three pairwise nonadjacent vertices of the $C_6$ that are not adjacent to $x$, and together with $x$ they form a stable set of size four, a contradiction.
	Thus the equality case is $C_7$.
	
	Next, we prove $R_6(4)=9$.
	Let $H$ have girth at least six and $\alpha(H)\le4$.
	If $\delta(H)\ge3$, then for any vertex $v$ the set $N_H^2(v)$ has size at least \(3\cdot2=6\).
	Indeed, every neighbor of $v$ has at least two other neighbors, and different neighbors of $v$ cannot share such a neighbor in $N_H^2(v)$ because that would make a $C_4$.
	Also $N_H^2(v)$ is stable, since an edge inside $N_H^2(v)$ would create a $C_5$ or a shorter cycle.
	Thus $\alpha(H)\ge6$, a contradiction.
	Therefore $H$ has a vertex $v$ with $d(v)\le2$.
	
	Let $A=V(H)\setminus N_H[v]$.  Then $\alpha(H[A])\le3$, because a stable set of size four in $A$ together with $v$ would give a stable set of size five in $H$.
	Hence $|A|\le R_6(3)=7$, and so
	\[
	|V(H)|\le1+2+7=10.
	\]
	We now exclude $|V(H)|=10$.  In that case $d_H(v)=2$ and $H[A]$ has seven vertices with independence number at most three, hence $H[A]=C_7$ by the equality case above.  Write $N_H(v)=\{a,b\}$.  Since $H$ is triangle-free, $ab\notin E(H)$.  Each of $a$ and $b$ has at most one neighbor on the $C_7$, for two neighbors would again produce a $C_3$, $C_4$, or $C_5$ using a shortest arc on the $C_7$.  Thus at most two vertices of the $C_7$ are adjacent to $a$ or $b$.  After deleting those at most two vertices from $C_7$, the remaining graph contains a stable set of size three.  Those three vertices together with $a$ and $b$ form a stable set of size five, contradicting $\alpha(H)\le4$.  Hence $|V(H)|\le9$.  The cycle $C_9$ has girth nine and independence number four, so $R_6(4)=9$.
	
	Now let $\alpha(H)\le5$.
	If $\delta(H)\ge3$, the preceding argument for $N_H^2(v)$ gives a stable set of size at least six, which is impossible.
	Thus some vertex $v$ has degree at most two.
	For $A=V(H)\setminus N_H[v]$, we have $\alpha(H[A])\le4$, so $|A|\le R_6(4)=9$.
	Therefore
	\[
	|V(H)|\le1+2+9=12.
	\]
	This proves $R_6(5)\le12$.
	
	Finally let $\alpha(H)\le6$.
	If $\delta(H)\ge4$, then for any vertex $v$ the second neighborhood $N_H^2(v)$ has size at least \(4\cdot3=12\), and is stable, a contradiction.
	Thus some vertex $v$ has degree at most three.
	For $A=V(H)\setminus N_H[v]$, we have $\alpha(H[A])\le5$, so $|A|\le R_6(5)\le12$.
	Therefore
	\[
	|V(H)|\le1+3+12=16.
	\]
	This proves $R_6(6)\le16$.
\end{proof}

The exact values $F(4),F(5),F(6)$ require not only upper bounds on $|V(H)|$, but also lower bounds on $\nu(H)$.
We now record the finite calculation used later.

For $0\le m\le16$, define $\lambda(m)$ by
\[
\lambda(0)=0,
\]
and
\[
\begin{array}{c|cccccc}
	m & 1,2 & 3,4 & 5,6,7 & 8,9 & 10,11,12 & 13,14,15,16 \\
	\hline
	\lambda(m) & 1 & 2 & 3 & 4 & 5 & 6.
\end{array}
\]
By Lemma~\ref{lem:Rsmall}, we can deduce that every graph of girth at least six and order $m\le16$ has independence number at least $\lambda(m)$.

We use the Tutte--Berge formula~(see \cite{Berge1958,Tutte1947}) in the form
\begin{equation}
	|V(H)| - 2\nu(H) = \max_{S \subseteq V(H)} \bigl( o(H-S) - |S| \bigr),
	\label{eq:tutte-berge}
\end{equation}
where $o(H-S)$ is the number of odd components of $H-S$.

Suppose that $|V(H)|=n$ and that we want to prove $n-\nu(H)\le B$ for some constant \(B\) (in our applications $B$ will be $5$, $6$, or $8$).
If this fails, then \(n-\nu(H)\ge B+1\), so \(\nu(H)\le n-B-1\).
Tutte--Berge~\eqref{eq:tutte-berge} gives a set $S\subseteq V(H)$ such that
\[
o(H-S)-|S|\ge n-2(n-B-1)=2B+2-n.
\]
Let $t=|S|$.  If the component orders of $H-S$ are $m_1,\ldots,m_q$, then
\[
\sum_{i=1}^q m_i=n-t,
\]
and at least
\[
t+2B+2-n
\]
of the $m_i$ are odd.
Since different components have no edges between them, the union of stable sets from the components is stable in $H$.
Therefore
\[
\alpha(H)\ge\sum_{i=1}^q\lambda(m_i).
\]
The following table gives the minimum value of this lower bound over all choices of $t$ and all such component-order partitions.
The displayed partition is a partition of $n-t$ attaining the minimum.

\begin{table}[htbp]
	\centering
	\begin{tabular}{c c c c}
		\toprule
		Target $B$ & $n$ & minimizing component orders & forced lower bound for $\alpha(H)$ \\
		\midrule
		$5$ & $6$ & $1+1+1+1+1+1$ & $6$ \\
		$5$ & $7$ & $3+1+1+1+1$ & $6$ \\
		$5$ & $8$ & $5+1+1+1$ & $6$ \\
		$5$ & $9$ & $7+1+1$ & $5$ \\
		\midrule
		$6$ & $7$ & $1+1+1+1+1+1+1$ & $7$ \\
		$6$ & $8$ & $3+1+1+1+1+1$ & $7$ \\
		$6$ & $9$ & $5+1+1+1+1$ & $7$ \\
		$6$ & $10$ & $7+1+1+1$ & $6$ \\
		$6$ & $11$ & $9+1+1$ & $6$ \\
		$6$ & $12$ & $11+1$ & $6$ \\
		\midrule
		$8$ & $9$ & $1+1+1+1+1+1+1+1+1$ & $9$ \\
		$8$ & $10$ & $3+1+1+1+1+1+1+1$ & $9$ \\
		$8$ & $11$ & $5+1+1+1+1+1+1$ & $9$ \\
		$8$ & $12$ & $7+1+1+1+1+1$ & $8$ \\
		$8$ & $13$ & $9+1+1+1+1$ & $8$ \\
		$8$ & $14$ & $11+1+1+1$ & $8$ \\
		$8$ & $15$ & $7+7+1$ & $7$ \\
		$8$ & $16$ & $15+1$ & $7$ \\
		\bottomrule
	\end{tabular}
	\caption{Tutte--Berge~\eqref{eq:tutte-berge} calculations for $B=5,6,8$.}
	\label{tab:tb-calc}
\end{table}

For example, in the row $B=6,n=12$, the failure of $n-\nu(H)\le6$ gives a set $S$ for which $H-S$ has at least $|S|+2$ odd components.  The minimum possible contribution to the independence number, over all choices of $S$ and all component orders, is attained by component orders $11+1$, giving $\lambda(11)+\lambda(1)=5+1=6$.
The other rows in~\autoref{tab:tb-calc} are calculated in the same manner from the displayed values of $\lambda$.

\section{The asymptotic bounds}\label{part1}

In this section, we prove part (i) of Theorem~\ref{thm:main}. 
We first derive the general upper bound for \(F(s)\) via the complement reduction and the second neighborhood argument, then obtain the lower bound from Spencer's Ramsey construction (see \cite{Spencer1977}).

\begin{lemma}\label{lem:Exact reduction to R_6}
	For every $s\ge 1$,
	\[
	\frac12R_6(s)\le F(s)\le \frac12\bigl(R_6(s)+s\bigr).
	\]
	Consequently, whenever $R_6(s)/s\to\infty$,
	\[
	F(s)=\left(\frac12+o(1)\right)R_6(s).
	\]
\end{lemma}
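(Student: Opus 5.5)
Throughout I work in the complement: $G$ has $\omega(G)\le s$ and $\mathrm{g}(\overline G)\ge 6$ exactly when $H=\overline G$ has girth at least six and $\alpha(H)\le s$. Since $H$ is triangle-free, Lemma~\ref{lem:coloring-matching} gives $\chi(G)=|V(H)|-\nu(H)$. So
\[
F(s)=\max\{\,|V(H)|-\nu(H): \mathrm{g}(H)\ge 6,\ \alpha(H)\le s\,\},
\]
and both bounds reduce to estimating $\nu(H)$ in terms of $|V(H)|$ and $\alpha(H)$.

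\emph{Lower bound.} Take an extremal $H$ with $|V(H)|=R_6(s)$, girth at least six and $\alpha(H)\le s$. Every matching has at most $|V(H)|/2$ edges, so $\chi(\overline H)=|V(H)|-\nu(H)\ge |V(H)|/2=R_6(s)/2$. Hence $F(s)\ge \frac12 R_6(s)$.

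\emph{Upper bound.} Let $H$ have girth at least six, $\alpha(H)\le s$ and $n=|V(H)|\le R_6(s)$, and let $M$ be a maximum matching. The vertices not covered by $M$ form a stable set, since otherwise $M$ could be enlarged. Hence $n-2\nu(H)\le \alpha(H)\le s$, so $\nu(H)\ge (n-s)/2$. Therefore
\[
\chi(\overline H)=n-\nu(H)\le \frac{n+s}{2}\le \frac{R_6(s)+s}{2}.
\]
This is the easy direction and needs no Tutte--Berge.

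\emph{Consequence.} If $R_6(s)/s\to\infty$, then $s=o(R_6(s))$, and the two bounds sandwich $F(s)$ as $(\frac12+o(1))R_6(s)$.

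Every step is routine. The only point that needs care is the observation that the unmatched vertices are stable; everything else is the complement reduction already set up in Section~\ref{sec:tools}.
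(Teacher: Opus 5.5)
Your proof is correct and follows essentially the same route as the paper: the lower bound comes from an extremal $H$ with $\nu(H)\le |V(H)|/2$ via Lemma~\ref{lem:coloring-matching}, and the upper bound from the uncovered vertices of a matching forming a stable set of size at most $s$. The only difference is that the paper takes a maximal rather than a maximum matching, which changes nothing.
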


\begin{proof}
	For the lower bound, take a graph $H$ with $\mathrm{g}(H)\ge6$, $\alpha(H)\le s$, and $|V(H)|=R_6(s)$.
	Since \(\nu(H)\le |V(H)|/2\), Lemma~\ref{lem:coloring-matching} gives
	\[
	\chi(\overline H)=|V(H)|-\nu(H)\ge |V(H)|/2=R_6(s)/2.
	\]
	Since $\omega(\overline H)=\alpha(H)\,\le s$, this gives $F(s)\ge R_6(s)/2$.
	
	For the upper bound, let $H$ be any graph with $\mathrm{g}(H)\ge6$ and $\alpha(H)\le s$.  Let $M$ be a maximal matching in $H$.
	The vertices not covered by $M$ form a stable set: if two uncovered vertices were adjacent, the edge between them could be added to the matching.
	Hence at most $s$ vertices are uncovered.
	Therefore \(2\nu(H)\ge |V(H)|-s\),
	so
	\[
	\nu(H)\ge \frac{|V(H)|-s}{2}.
	\]
	By Lemma~\ref{lem:coloring-matching},
	\[
	\chi(\overline H)=|V(H)|-\nu(H)
	\le |V(H)|-\frac{|V(H)|-s}{2}
	=\frac{|V(H)|+s}{2}
	\le \frac{R_6(s)+s}{2}.
	\]
	This proves the claimed upper bound.
\end{proof}

Recall that
\[
F(s)=\max\{\chi(G):\omega(G)\le s,\ \mathrm{g}(\overline G)\ge 6\}.
\]

\begin{proposition}\label{thm:asymptotic bounds}
	There exists a constant \(c>0\) such that for all \(s\ge 2\),
	\[
	c\left(\frac{s}{\log s}\right)^{4/3}
	\le
	F(s)
	\le
	(1+o(1))\frac{s^{3/2}}{\log s}.
	\]
\end{proposition}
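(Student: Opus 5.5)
By Lemma~\ref{lem:Exact reduction to R_6}, it suffices to bound $R_6(s)$ from both sides. We will show that
\[
c'\left(\frac{s}{\log s}\right)^{4/3}\le R_6(s)\le (2+o(1))\frac{s^{3/2}}{\log s}.
\]
The upper bound on $F$ then follows from $F(s)\le \tfrac12(R_6(s)+s)$, since $s=o(s^{3/2}/\log s)$. The lower bound on $F$ follows from $F(s)\ge R_6(s)/2$.

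\emph{Upper bound.} Let $H$ have girth at least $6$, $\alpha(H)\le s$, and $n$ vertices, and let $d$ be its average degree. We use two facts.

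First, $H$ is triangle-free, so Shearer's theorem gives
\[
\alpha(H)\ge (1+o(1))\frac{n\log d}{d}.
\]

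Second, we use a second-neighbourhood argument. Pass to a subgraph of minimum degree $\delta\ge d/2$; it still has girth at least $6$. For any vertex $v$ of this subgraph, the set $N^2(v)$ has at least $\delta(\delta-1)$ vertices, because distinct neighbours of $v$ share no common neighbour other than $v$ (no $C_4$). Moreover $N^2(v)$ is stable, since an edge inside it creates a $C_5$ or a shorter cycle. Hence $s\ge \delta(\delta-1)$, so $\delta\le(1+o(1))\sqrt s$.

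To get a sharp constant, we apply this to a degeneracy ordering. Every subgraph of $H$ has a vertex of degree at most $\sqrt s+1$, so $H$ is $(\sqrt s+1)$-degenerate and $d\le 2\sqrt s+2$. Shearer's bound then gives
\[
s\ge(1+o(1))\frac{n\log(2\sqrt s)}{2\sqrt s},
\]
that is,
\[
n\le(1+o(1))\frac{2\sqrt s\cdot s}{\tfrac12\log s}=(4+o(1))\frac{s^{3/2}}{\log s}.
\]
This is a factor $2$ too weak, so the constant needs more care. We would instead use the local version of Shearer's bound, $\alpha\ge\sum_v f(d(v))$ with $f(x)\sim\log x/x$, which is convex and decreasing. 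By Jensen's inequality, combined with $d\le 2\sqrt s$, this still loses the factor $2$.

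The better route is to iterate deletion. Repeatedly remove a vertex $v$ of minimum degree $\delta\le(1+o(1))\sqrt s$ together with its neighbourhood. This removes about $\sqrt s$ vertices and adds $v$ to an independent set. Done naively, this gives only $n\le s^{3/2}$ up to constants.

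The key step is therefore to combine both ideas: run Shearer's random-greedy (or the R\"odl-nibble/Ajtai--Koml\'os--Szemer\'edi analysis) on a graph whose maximum degree is effectively $\sqrt s$. If we can first reduce to a subgraph with maximum degree at most $(1+o(1))\sqrt s$ on at least $(1-o(1))$ of the vertices, then Shearer gives
\[
s\ge(1+o(1))\frac{n\log\sqrt s}{\sqrt s},
\]
hence $n\le (2+o(1))s^{3/2}/\log s$, exactly as required.

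This reduction is the main obstacle. We would try to prove that vertices of degree much larger than $\sqrt s$ are rare. If $v$ has degree $D$, then $N^2(v)$ is stable of size $\sum_{u\in N(v)}(d(u)-1)\le s$. Hence each vertex has average neighbour degree at most about $s/D$. A double-counting argument on $\sum_v d(v)^2$ should then show that the high-degree vertices contribute negligibly. Failing that, we would use the weighted Shearer bound $\alpha\ge\sum_v f(d(v))$ directly, together with the constraint $\sum_{u\in N(v)}d(u)\le s+d(v)$ for every $v$, to optimise the sum.

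\emph{Lower bound.} We take Spencer's probabilistic construction from~\cite{Spencer1977} for $R(C_{\le5},K_t)$ via deletion. Take $G(n,p)$ with $p=n^{-2/3}$. The expected number of cycles of length $3$, $4$, and $5$ is $O((np)^5)=O(n^{5/3})$. We would rather take $p=\epsilon n^{-3/4}$ so that these counts are at most $n/4$; delete one vertex from each such cycle. Any set of size $t=Cn^{3/4}\log n$ is independent with probability at most $e^{-p\binom t2}$, and the union bound over $\binom nt$ sets fails for large $C$. So, after deletions, we obtain $H$ with at least $n/2$ vertices, girth at least $6$, and $\alpha(H)\le s:=Cn^{3/4}\log n$. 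Inverting, $n\asymp(s/\log s)^{4/3}$. Hence
\[
R_6(s)\ge c'(s/\log s)^{4/3},
\]
and $F(s)\ge R_6(s)/2$ finishes the proof. We take $s\ge2$ with monotonicity of $R_6$ to fill in intermediate values.
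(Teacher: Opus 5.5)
There is a genuine gap in each half of your argument.

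\textbf{Upper bound.} You never prove the estimate you actually need, $d\le(1+o(1))\sqrt s$ for the average degree. The degeneracy argument gives only $d\le 2\sqrt s+2$, which is why you get the constant $4$. Your proposed reduction to maximum degree $(1+o(1))\sqrt s$ on almost all vertices is left open. It is also unnecessary, because Shearer's bound depends only on the average degree. The missing step uses the per-vertex inequality you wrote at the end. Sum $|N^2(v)|=\sum_{u\in N(v)}(d(u)-1)\le s$ over all $v$. Each $u$ is counted $d(u)$ times, so $\sum_u d(u)(d(u)-1)\le ns$, that is, $\frac{1}{n}\sum_u d(u)^2\le s+d$. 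Cauchy--Schwarz gives $d^2\le\frac{1}{n}\sum_u d(u)^2$, hence $d^2-d\le s$ and $d\le\sqrt s+1$. Shearer's function $f(d)=(d\log d-d+1)/(d-1)^2\sim\log d/d$ is decreasing. So $s\ge\alpha(H)\ge nf(d)\ge nf(\sqrt s+1)$, which gives $n\le(2+o(1))s^{3/2}/\log s$ with no case analysis on $d$. This is exactly the paper's argument: it sums $|N^2(v)|$, applies Cauchy's inequality, then applies Ajtai--Koml\'os--Szemer\'edi--Shearer.

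\textbf{Lower bound.} Your deletion computation is wrong. With $p=\epsilon n^{-3/4}$ you have $np=\epsilon n^{1/4}$. The expected number of $5$-cycles is then about $(np)^5/10=\Theta(\epsilon^5n^{5/4})$, which is much larger than $n/4$. Deleting one vertex per short cycle therefore destroys the graph. Plain vertex deletion needs $(np)^5=O(n)$, i.e.\ $p=O(n^{-4/5})$. That yields $\alpha=O(n^{4/5}\log n)$ and only $R_6(s)\ge c(s/\log s)^{5/4}$.

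The exponent $4/3$ requires Spencer's Lov\'asz Local Lemma argument, which the paper invokes by citation, not a deletion argument. There one does not need few short cycles globally. One needs only that a fixed $t$-set $T$ shares an edge with few potential short cycles in expectation. For $5$-cycles this quantity is $\Theta(t^2n^3p^5)$, which equals $\Theta(\epsilon^4)\cdot pt^2$ when $p=\epsilon n^{-3/4}$. That is a small fraction of the exponent $pt^2/2$ in $\Pr(T\text{ independent})$; the $3$- and $4$-cycle terms are smaller still. With $t=Cn^{3/4}\log n$ and $C\epsilon$ large, the local lemma then produces a graph with no $C_3,C_4,C_5$ and $\alpha<t$. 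This gives $R_6(s)\ge c(s/\log s)^{4/3}$.
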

\begin{proof}
	The lower bound is supplied by Spencer's short-cycle Ramsey construction (see \cite{Spencer1977}).
	It gives a constant $c>0$ such that
	\[
	R_6(s)
	\ge
	c\left(\frac{s}{\log s}\right)^{4/3}.
	\]
	Therefore, Lemma~\ref{lem:Exact reduction to R_6} gives
	\[
	F(s) \ge \frac{1}{2}R_6(s) \ge c'\left(\frac{s}{\log s}\right)^{4/3},
	\]
	where \(c' = \frac{c}{2} > 0\).
	
	For the upper bound, let \(H\) be a graph with \(\mathrm{g}(H)\ge 6\), \(\alpha(H)\le s\), \(|V(H)|=n\), and average degree \(d\). 
	For each vertex \(v\), the second neighborhood \(N_H^2(v)\) is stable; indeed, if two vertices of \(N_H^2(v)\) were adjacent, then together with their parents in \(N_H(v)\) and the vertex \(v\) they would form a cycle of length at most \(5\), contradicting \(\mathrm{g}(H)\ge 6\).
	Moreover, every vertex of \(N_H^2(v)\) has a unique neighbor in \(N_H(v)\); otherwise, two vertices of \(N_H(v)\) would share a common neighbor in \(N_H^2(v)\), creating a \(C_4\).
	Thus
	\[
	|N_H^2(v)| = \sum_{u\in N_H(v)}(d_H(u)-1) \le s.
	\]
	Summing over all vertices gives
	\[
	\sum_{v\in V(H)} |N_H^2(v)| = \sum_{u\in V(H)} d_H(u)(d_H(u)-1) \le ns.
	\]
	Hence
	\[
	\frac{1}{n}\sum_{u\in V(H)} d_H(u)^2 - d \le s.
	\]
	By Cauchy's inequality 
	\[
	\left(\frac{1}{n}\sum_{u\in V(H)} d_H(u)\right)^2
	\le \frac{1}{n}\sum_{u\in V(H)} d_H(u)^2\], 
	we have
	\[
	d^2 \le \frac{1}{n}\sum_{u\in V(H)} d_H(u)^2.
	\]
	So
	\[
	d^2 - d \le s,
	\]
	and solving the quadratic inequality gives
	\[
	d \le \sqrt{s} + 1.
	\]
	Since \(H\) is triangle-free, the Ajtai--Koml\'os--Szemer\'edi--Shearer independent set estimate (see \cite{Ajtai1980,Shearer1983}) gives
	\[
	\alpha(H) \ge (1+o(1))\frac{n\log d}{d}
	\]
	as \(d\to\infty\). Since \(\alpha(H)\le s\), it follows that
	\[
	s \ge (1+o(1))\frac{n\log d}{d}.
	\]
	Using \(d \le (1+o(1))\sqrt{s}\), we have \(\log d = \left(\frac{1}{2}+o(1)\right)\log s\). 
	From \(s \ge (1+o(1))\frac{n\log d}{d}\), it follows that
	\[
	n \le (2+o(1))\frac{s^{3/2}}{\log s}.
	\]
	Thus
	\[
	R_6(s) \le (2+o(1))\frac{s^{3/2}}{\log s}.
	\]
	By Lemma~\ref{lem:Exact reduction to R_6},
	\[
	F(s) \le \frac{1}{2}(R_6(s) + s) \le (1+o(1))\frac{s^{3/2}}{\log s}.
	\]
	
	Combining the lower and upper bounds gives
	\[
	c\left(\frac{s}{\log s}\right)^{4/3}
	\le
	F(s)
	\le
	(1+o(1))\frac{s^{3/2}}{\log s}.
	\]
	This proves Proposition~\ref{thm:asymptotic bounds}.
\end{proof}

\section{Exact small values}\label{part2}

In this section, we prove the second part of Theorem~\ref{thm:main}.
The proof relies on the Tutte--Berge formula~\eqref{eq:tutte-berge} and the finite calculation recorded in Section~\ref{sec:tools}.

\begin{proposition}\label{thm:F4}
	If $G$ satisfies $\mathrm{g}(\overline G)\ge6$ and is $K_5$-free, then \(\chi(G)\le5\).
	The bound is sharp.  Equivalently, \(F(4)=5\).
\end{proposition}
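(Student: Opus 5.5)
Pass to the complement $H=\overline G$. Then $H$ has girth at least six and $\alpha(H)=\omega(G)\le 4$. By Lemma~\ref{lem:coloring-matching}, $\chi(G)=n-\nu(H)$ with $n=|V(H)|$. By Lemma~\ref{lem:Rsmall}, $n\le R_6(4)=9$. So the upper bound reduces to showing $n-\nu(H)\le 5$ for every such $H$ with $n\le 9$.

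For $n\le 5$ this is trivial, since $n-\nu(H)\le n$. For $6\le n\le 9$, suppose $n-\nu(H)\ge 6$. The Tutte--Berge discussion preceding Table~\ref{tab:tb-calc}, applied with target $B=5$, gives a set $S$ such that $H-S$ has many odd components. Since components contribute disjointly to stable sets, $\alpha(H)\ge\sum\lambda(m_i)$. The rows $B=5$, $n=6,7,8,9$ of Table~\ref{tab:tb-calc} give the forced lower bounds $6,6,6,5$, each exceeding $4$. This contradicts $\alpha(H)\le 4$.

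The only step needing care is that the table minimum is computed correctly. For instance, at $n=9$ we need $o(H-S)\ge |S|+3$. With $t=|S|$, the parts sum to $9-t$. The cheapest configuration is $t=0$ with orders $7+1+1$, giving $\lambda(7)+2\lambda(1)=3+1+1=5$. Larger $t$ forces more singleton-type odd parts, which only increases the bound. I would check this row explicitly, since it is the tightest, and rely on the table for the others.

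For sharpness, take $H=C_9$. It has girth $9$ and $\alpha(C_9)=4$, so $G=\overline{C_9}$ satisfies $\mathrm g(\overline G)\ge 6$ and $\omega(G)=4$, hence $G$ is $K_5$-free. Since $\nu(C_9)=4$, Lemma~\ref{lem:coloring-matching} gives $\chi(\overline{C_9})=9-4=5$. Therefore $F(4)=5$.
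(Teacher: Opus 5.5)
Your proposal is correct and follows the paper's proof essentially step for step. You pass to $H=\overline G$, bound $n\le R_6(4)=9$, use $\chi(G)=n-\nu(H)$, rule out $n-\nu(H)\ge 6$ via the $B=5$ rows of Table~\ref{tab:tb-calc}, and use $\overline{C_9}$ for sharpness; your explicit check of the tight row $n=9$ (minimum $5$, attained by $7+1+1$ with $|S|=0$) is accurate and simply makes visible what the paper leaves to the table.
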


\begin{proof}
	Let $H=\overline G$.  Then $\mathrm{g}(H)\ge6$ and
	\[
	\alpha(H)=\omega(G)\le4.
	\]
	By Lemma~\ref{lem:Rsmall},
	\[
	n:=|V(H)|\le R_6(4)=9.
	\]
	Since $H$ is triangle-free, Lemma~\ref{lem:coloring-matching} gives
	\[
	\chi(G)=\chi(\overline H)=n-\nu(H).
	\]
	It remains to prove $n-\nu(H)\le5$.
	
	If $n\le5$, this is immediate.  Suppose $6\le n\le9$ and assume, for a contradiction, that
	\[
	n-\nu(H)\ge6.
	\]
	Then the finite Tutte--Berge~\eqref{eq:tutte-berge} calculation from~\autoref{tab:tb-calc} with target $B=5$ gives
	\[
	\alpha(H)\ge
	\begin{cases}
		6, & n=6,7,8,\\
		5, & n=9.
	\end{cases}
	\]
	In every case $\alpha(H)\ge5$, contradicting $\alpha(H)\le4$.  Hence $n-\nu(H)\le5$, and therefore \(\chi(G)\le5\) by Lemma~\ref{lem:coloring-matching}.

	For sharpness, take
	\[
	H=C_9,
	\qquad
	G=\overline{C_9}.
	\]
	Then $\mathrm{g}(\overline G)=\mathrm{g}(H)=9\ge6$ and
	\[
	\omega(G)=\alpha(H)=\alpha(C_9)=4,
	\]
	so $G$ is $K_5$-free.  Also
	\[
	\nu(H)=\nu(C_9)=4,
	\]
	and hence
	\[
	\chi(G)=\chi(\overline H)=|V(H)|-\nu(H)=9-4=5.
	\]
	by Lemma~\ref{lem:coloring-matching}.
	Thus the bound is attained.
\end{proof}

\begin{proposition}\label{thm:F5}
	If $G$ satisfies $\mathrm{g}(\overline G)\ge6$ and is $K_6$-free, then \(\chi(G)\le6\).
	The bound is sharp.
	Equivalently, \(F(5)=6\).
\end{proposition}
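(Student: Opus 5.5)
We follow the same scheme as in Proposition~\ref{thm:F4}. Put $H=\overline G$, so that $\mathrm{g}(H)\ge6$ and $\alpha(H)=\omega(G)\le5$. By Lemma~\ref{lem:Rsmall}, $n:=|V(H)|\le R_6(5)\le 12$. Since $H$ is triangle-free, Lemma~\ref{lem:coloring-matching} gives $\chi(G)=n-\nu(H)$, so it suffices to prove $n-\nu(H)\le 6$.

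If $n\le6$ there is nothing to prove. For $7\le n\le 12$, suppose for a contradiction that $n-\nu(H)\ge7$. The Tutte--Berge formula~\eqref{eq:tutte-berge} then yields a set $S$ with $o(H-S)-|S|\ge 14-n$. We read off the rows of \autoref{tab:tb-calc} with target $B=6$; these give $\alpha(H)\ge7$ for $n=7,8,9$ and $\alpha(H)\ge6$ for $n=10,11,12$. In every case $\alpha(H)\ge6$, which contradicts $\alpha(H)\le5$. Hence $\chi(G)\le6$.

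The delicate step is making sure that the table rows for $B=6$ really are minima over all admissible $t=|S|$ and all partitions of $n-t$ with at least $t+14-n$ odd parts. As a spot check for $n=12$: taking $t=0$ forces at least two odd parts summing to $12$. The partition $11+1$ gives $\lambda(11)+\lambda(1)=6$. The alternatives $9+3$ and $7+5$ give $4+2=6$ and $3+3=6$. Larger $t$ forces many singleton components and only increases the bound. We rely on the table as recorded.

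For sharpness we need a graph $H$ of girth at least $6$ with $\alpha(H)\le5$ and $n-\nu(H)=6$. A natural choice is $H=C_{11}$. It has girth $11$, $\alpha(C_{11})=5$ and $\nu(C_{11})=5$, so $G=\overline{C_{11}}$ is $K_6$-free and
\[
\chi(G)=11-5=6.
\]
Together with the lower bound, this gives $F(5)=6$.
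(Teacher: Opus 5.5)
Your proposal is correct and follows the paper's proof essentially verbatim. You pass to $H=\overline G$, bound $n\le R_6(5)\le 12$, use $\chi(G)=n-\nu(H)$, rule out $n-\nu(H)\ge 7$ via Tutte--Berge and the $B=6$ rows of \autoref{tab:tb-calc}, and take $H=C_{11}$ for sharpness. (One small point on your spot check for $n=12$: the cases $t=1,2$ do not strictly increase the bound, since for example $9+1+1$ and $7+1+1+1$ also give $6$. This is harmless, because $6>5$ still gives the contradiction.)
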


\begin{proof}
	Let $H=\overline G$.  Then $\mathrm{g}(H)\ge6$ and
	\[
	\alpha(H)=\omega(G)\le5.
	\]
	By Lemma~\ref{lem:Rsmall},
	\[
	n:=|V(H)|\le R_6(5)\le12.
	\]
	Again, Lemma~\ref{lem:coloring-matching} gives
	\[
	\chi(G)=\chi(\overline H)=n-\nu(H).
	\]
	We prove $n-\nu(H)\le6$.
	
	If $n\le6$, this is immediate.  Suppose $7\le n\le12$ and assume, for a contradiction, that
	\[
	n-\nu(H)\ge7.
	\]
	Then the finite Tutte--Berge~\eqref{eq:tutte-berge} calculation from~\autoref{tab:tb-calc} with target $B=6$ gives
	\[
	\alpha(H)\ge
	\begin{cases}
		7, & n=7,8,9,\\
		6, & n=10,11,12.
	\end{cases}
	\]
	In every case $\alpha(H)\ge6$, contradicting $\alpha(H)\le5$.
	Therefore $n-\nu(H)\le6$, and so \(\chi(G)\le6\) by Lemma~\ref{lem:coloring-matching}.
	
	For sharpness, take
	\[
	H=C_{11},
	\qquad
	G=\overline{C_{11}}.
	\]
	Then
	\[
	\mathrm{g}(\overline G)=\mathrm{g}(H)=11\ge6,
	\]
	and
	\[
	\omega(G)=\alpha(H)=\alpha(C_{11})=5,
	\]
	so $G$ is $K_6$-free.  Furthermore,
	\[
	\nu(H)=\nu(C_{11})=5.
	\]
	Thus
	\[
	\chi(G)=\chi(\overline H)=|V(H)|-\nu(H)=11-5=6.
	\]
	by Lemma~\ref{lem:coloring-matching}.
	This proves sharpness.
\end{proof}

\begin{proposition}\label{thm:F6}
	If $G$ satisfies $\mathrm{g}(\overline G)\ge6$ and is $K_7$-free, then \(\chi(G)\le8\).
	The bound is sharp.
	Equivalently, \(F(6)=8\).
\end{proposition}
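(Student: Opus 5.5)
The plan follows the proofs of Propositions~\ref{thm:F4} and~\ref{thm:F5}, using the $B=8$ rows of \autoref{tab:tb-calc}. Let $H=\overline G$. Then $\mathrm{g}(H)\ge 6$ and $\alpha(H)=\omega(G)\le 6$. By Lemma~\ref{lem:Rsmall} we have $n:=|V(H)|\le R_6(6)\le 16$. Since $H$ is triangle-free, Lemma~\ref{lem:coloring-matching} gives $\chi(G)=n-\nu(H)$, so it suffices to show that $n-\nu(H)\le 8$.

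If $n\le 8$, the bound $n-\nu(H)\le 8$ holds trivially. For $9\le n\le 16$, suppose for a contradiction that $n-\nu(H)\ge 9$. The Tutte--Berge formula~\eqref{eq:tutte-berge} then gives a set $S$ such that $H-S$ has at least $|S|+18-n$ odd components. Each component has order at most $16$, so its independence number is at least $\lambda$ of its order. The $B=8$ rows of \autoref{tab:tb-calc} then force
\[
\alpha(H)\ge 9 \ (n=9,10,11),\qquad \alpha(H)\ge 8 \ (n=12,13,14),\qquad \alpha(H)\ge 7 \ (n=15,16).
\]
Every case contradicts $\alpha(H)\le 6$, so $\chi(G)\le 8$.

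The main point needing care is the correctness of the minimizing partitions in the $n=15,16$ rows, since the margin there is only one. For $n=16$ I would check every $t$. At $t=0$ there must be at least $2$ odd components, and the cheapest choices are $15+1$ and $13+3$, each giving $6+1=7$ or more. At $t=1$ there must be at least $3$ odd components summing to $15$, for example $13+1+1$, giving at least $8$. Larger $t$ only adds more components. For $n=15$ the analogous check shows that $7+7+1$ gives $3+3+1=7$ and that nothing cheaper exists; for instance $13+1+1$ gives $8$ and $9+5+1$ gives $4+3+1=8$. This is a routine but careful enumeration, and I would present it row by row. Note that $\lambda$ is only used through Lemma~\ref{lem:Rsmall}, which gives exactly the bounds $\lambda(m)$ for $m\le 16$.

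For sharpness, take $H=C_7\cup C_7$, the disjoint union of two $7$-cycles, and $G=\overline H$. Then $\mathrm{g}(H)=7\ge 6$ and $\omega(G)=\alpha(H)=3+3=6$, so $G$ is $K_7$-free. Also $\nu(H)=3+3=6$, so Lemma~\ref{lem:coloring-matching} gives $\chi(G)=14-6=8$. Hence $F(6)=8$.
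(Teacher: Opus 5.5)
Your argument is correct and is the same as the paper's: pass to $H=\overline G$, use $n\le R_6(6)\le 16$ and $\chi(G)=n-\nu(H)$, invoke the $B=8$ Tutte--Berge rows of \autoref{tab:tb-calc}, and use $2C_7$ for sharpness. Your optional check of the $n=16$ row has small slips: $13+3$ gives $6+2=8$, not $7$, while $9+7$ at $t=0$ gives exactly $4+3=7$, and $7+7+1$ at $t=1$ gives exactly $3+3+1=7$, so $t=1$ does not force $8$. Since no admissible partition goes below $7$, the conclusion is unaffected.
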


\begin{proof}
	Let $H=\overline G$.  Then $\mathrm{g}(H)\ge6$ and
	\[
	\alpha(H)=\omega(G)\le6.
	\]
	By Lemma~\ref{lem:Rsmall},
	\[
	n:=|V(H)|\le R_6(6)\le16.
	\]
	As before, Lemma~\ref{lem:coloring-matching} gives
	\[
	\chi(G)=\chi(\overline H)=n-\nu(H).
	\]
	We prove $n-\nu(H)\le8$.
	
	If $n\le8$, this is immediate.
	Suppose $9\le n\le16$ and assume, for a contradiction, that
	\[
	n-\nu(H)\ge9.
	\]
	Then the finite Tutte--Berge~\eqref{eq:tutte-berge} calculation from~\autoref{tab:tb-calc} with target $B=8$ gives
	\[
	\alpha(H)\ge
	\begin{cases}
		9, & n=9,10,11,\\
		8, & n=12,13,14,\\
		7, & n=15,16.
	\end{cases}
	\]
	In every case $\alpha(H)\ge7$, contradicting $\alpha(H)\le6$.
	Therefore $n-\nu(H)\le8$, and consequently Lemma~\ref{lem:coloring-matching} gives
	\[
	\chi(G)\le8.
	\]
	
	For sharpness, take
	\[
	H=2C_7,
	\quad
	G=\overline H.
	\]
	Then $\mathrm{g}(H)=7\ge6$, and
	\[
	\omega(G)=\alpha(H)=\alpha(C_7)+\alpha(C_7)=3+3=6.
	\]
	Thus $G$ is $K_7$-free.  Moreover,
	\[
	\nu(H)=\nu(C_7)+\nu(C_7)=3+3=6,
	\]
	and
	\[
	|V(H)|=14.
	\]
	Hence by Lemma~\ref{lem:coloring-matching}
	\[
	\chi(G)=\chi(\overline H)=|V(H)|-\nu(H)=14-6=8.
	\]
	This proves sharpness.
\end{proof}

Finally we deduce Theorem~\ref{thm:main}. 


\begin{proof}[\bf Proof of Theorem~\ref{thm:main}.]
	The asymptotic bounds follow from Proposition~\ref{thm:asymptotic bounds}.
	The values \(F(4),F(5),F(6)\) are established in Propositions~\ref{thm:F4}--\ref{thm:F6}.
	So it remains to show that \(F(1)=1,  F(2)=2,  F(3)=4\).
	
	The case \(F(1)=1\) is trivial.
	
	We first show that \(F(2)=2\).
	Suppose \(G\) satisfies \(\mathrm{g}(\overline G)\ge6\) and is triangle-free.
	Let \(H=\overline G\).
	Then \(\alpha(H)=\omega(G)\le 2\).
	By Lemma~\ref{lem:Rsmall},
	\[
	n:=|V(H)|\le R_6(2)=4.
	\]
	Since \(G\) is triangle-free and has at most four vertices, \(G\) is bipartite; hence \(\chi(G)\le 2\).
	For sharpness, take \(G=C_4\). Its complement is \(2K_2\), which has girth infinity, and \(\chi(G)=\chi(C_4)=2\).
	Thus the bound is attained, so \(F(2)=2\).
	
	Finally, we show that \(F(3)=4\).
	Suppose $G$ satisfies $\mathrm{g}(\overline G)\ge6$ and is \(K_4\)-free.
	Since \(G\) is \(2K_2\)-free if and only if \(\overline G\) is \(C_4\)-free, graphs whose complements have girth at least $6$ form a subclass of $2K_2$-free graphs.
	It follows that \(G\) is also \(2K_2\)-free.
	By a theorem of Gaspers and Huang~\cite{GaspersHuang2019}, every $\{2K_2,K_4\}$-free graph satisfies \(\chi(G)\le 4\).
	For sharpness, take
	\[
	H=C_7,
	\quad
	G=\overline{C_7}.
	\]
	Then $\mathrm{g}(\overline G)=\mathrm{g}(H)=7 \ge6$ and
	\[
	\omega(G)=\alpha(H)=\alpha(C_7)=3,
	\]
	so $G$ is $K_4$-free and
	\[
	\chi(G)=\chi(\overline{C_7})=4.
	\]
	Thus the bound is attained.
	Therefore, \(F(3)=4\).
	
	In conclusion, we complete the proof of Theorem~\ref{thm:main}.
\end{proof}

\section{Almost perfect graphs with $\chi-\omega$ arbitrarily large}\label{T2proof}

In this section, we prove Theorem~\ref{T2}. The construction uses the join operation, which we now define.
For \(m\ge 2\), the \emph{join} of pairwise vertex-disjoint graphs $G_1,\dots,G_m$, denoted
\[
G_1+G_2+\cdots+G_m,
\]
is the graph obtained from their disjoint union by adding all possible edges between vertices in distinct parts.

\begin{lemma}\label{lem:join-parameters}
	For any graphs $G_1,\dots,G_m$,
	\[
	\alpha(G_1+\cdots+G_m)=\max \{\alpha(G_1),\alpha(G_2),\ldots,\alpha(G_m)\},
	\]
	\[
	\omega(G_1+\cdots+G_m)=\sum_{i=1}^m \omega(G_i),
	\]
	and
	\[
	\chi(G_1+\cdots+G_m)=\sum_{i=1}^m \chi(G_i).
	\]
	Moreover, every induced subgraph of $G_1+\cdots+G_m$ is of the form
	$A_1+\cdots+A_m$, where each $A_i$ is an induced subgraph of $G_i$.
\end{lemma}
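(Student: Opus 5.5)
The plan is to write $G=G_1+\cdots+G_m$ with vertex sets $V_1,\dots,V_m$ and first establish the structural fact about induced subgraphs, since the three parameter identities then reduce to statements about how cliques, stable sets and colour classes meet the parts. For $S\subseteq V(G)$, set $A_i=G_i[S\cap V_i]$. Adjacency inside $V_i$ in $G$ is the same as in $G_i$, and any two vertices in different parts are adjacent. Hence $G[S]=A_1+\cdots+A_m$, where some $A_i$ may be empty; empty parts are harmless under the conventions $\alpha(\varnothing)=\omega(\varnothing)=\chi(\varnothing)=0$.

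For $\alpha$, note that any two vertices in distinct parts are adjacent. So a stable set of $G$ lies entirely inside a single $V_i$, and it is stable there exactly when it is stable in $G_i$. This gives $\alpha(G)=\max_i\alpha(G_i)$.

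For $\omega$, take a clique $K$ of $G$. Each $K\cap V_i$ is a clique of $G_i$, so $|K|\le\sum_i\omega(G_i)$. Conversely, the union of maximum cliques $K_i$ of the $G_i$ is a clique of $G$, because vertices in different parts are always adjacent. Hence $\omega(G)=\sum_i\omega(G_i)$.

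For $\chi$, the upper bound comes from colouring each $G_i$ optimally with its own palette, the palettes being pairwise disjoint; this uses $\sum_i\chi(G_i)$ colours. For the lower bound, take any proper colouring of $G$. A colour class is a stable set, so by the $\alpha$ argument it lies inside a single $V_i$. Therefore the colours used on different parts are pairwise distinct, and the restriction to $V_i$ is a proper colouring of $G_i$. It follows that at least $\sum_i\chi(G_i)$ colours are used.

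None of these steps is a real obstacle. The only point needing care is the observation that every stable set, and hence every colour class, sits inside a single part; it is used for both the $\alpha$ and the $\chi$ lower bound.
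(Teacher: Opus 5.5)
Your proposal is correct and follows essentially the same approach as the paper: stable sets (and hence colour classes) lie within a single part, cliques are unions of cliques taken from each part, and induced subgraphs split partwise. You are somewhat more explicit than the paper in proving both inequalities for $\omega$ and $\chi$ and in handling empty parts via the stated conventions, but the argument is the same.
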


\begin{proof}
	An independent set in a join cannot use vertices from two distinct parts, because all cross-edges are present.
	Hence the largest independent set is contained in one part, proving the formula for $\alpha$.
	A clique in a join is obtained by taking a clique in each part and taking all of their vertices together, since all cross-edges are present.
	This proves the formula for clique number.
	Finally, the colors used on different parts must be disjoint, because all vertices in one part are adjacent to all vertices in every other part.
	Therefore the chromatic number is the sum of the chromatic numbers of the parts.
	The final assertion follows directly from the definition of the join.
\end{proof}

We shall also need the following elementary facts about induced subgraphs of odd cycles.

\begin{lemma}\label{lem:odd-cycle-induced}
	Let $k\ge 2$, and let $A$ be an induced subgraph of the odd cycle
	$C_{2k+1}$. If $A=C_{2k+1}$, then
	\[
	|V(A)|=2k+1,\qquad \alpha(A)=k,\qquad \omega(A)=2.
	\]
	If $A$ is a proper induced subgraph of $C_{2k+1}$, then
	\[
	|V(A)|\le \alpha(A)\omega(A).
	\]
	In every case, $\alpha(A)\le k$.
\end{lemma}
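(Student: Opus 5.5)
The plan is to handle the three assertions of Lemma~\ref{lem:odd-cycle-induced} in turn, and to reduce everything about proper induced subgraphs to paths.

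\textbf{The full cycle.} When $A=C_{2k+1}$ the values are standard and were already noted in the introduction. We have $|V(A)|=2k+1$. Since $k\ge 2$, the cycle has length at least $5$ and so is triangle-free, while it has edges; hence $\omega(A)=2$. Any stable set picks at most one vertex from each of the consecutive pairs $\{v_1,v_2\},\dots,\{v_{2k-1},v_{2k}\}$, and if it picks $v_{2k+1}$ it cannot pick $v_1$ or $v_{2k}$. This gives $\alpha\le k$, and $\{v_1,v_3,\dots,v_{2k-1}\}$ shows $\alpha=k$.

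\textbf{Proper induced subgraphs.} If $A$ is a proper induced subgraph, then deleting at least one vertex of the cycle leaves a disjoint union of paths $P_{n_1},\dots,P_{n_r}$, where $\sum n_i=|V(A)|$. The empty case is trivial under the stated conventions. I then split on the clique number.
\begin{itemize}
\item If $\omega(A)=1$, then $A$ has no edges, so $\alpha(A)=|V(A)|$ and the inequality holds with equality.
\item If $\omega(A)=2$, I use that a path on $n_i$ vertices has independence number $\lceil n_i/2\rceil$. Independent sets in different components combine, so
\[
\alpha(A)=\sum_i \lceil n_i/2\rceil \ge \frac{|V(A)|}{2},
\]
which gives $|V(A)|\le 2\alpha(A)=\alpha(A)\omega(A)$.
\end{itemize}
Equivalently, $A$ is bipartite, hence perfect, and the inequality follows from Theorem~\ref{Lov\'asz}.

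\textbf{The bound $\alpha(A)\le k$.} This holds in every case because $\alpha$ is monotone under taking induced subgraphs, so $\alpha(A)\le \alpha(C_{2k+1})=k$.

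I expect no real obstacle here. The only point needing care is the observation that removing any vertex from a cycle leaves a linear forest, which makes $A$ bipartite; everything else is routine.
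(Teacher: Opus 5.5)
Your proposal is correct and follows the paper's proof essentially step for step: a proper induced subgraph is a linear forest, the edgeless case gives equality, the case with an edge uses bipartiteness to get $\alpha(A)\ge |V(A)|/2$, and $\alpha(A)\le k$ follows from monotonicity of $\alpha$. One small repair is needed in the part the paper simply calls standard. As written, your pairing argument for $\alpha(C_{2k+1})\le k$ only gives $k+1$: if $v_{2k+1}$ is chosen, the pairs $\{v_1,v_2\}$ and $\{v_{2k-1},v_{2k}\}$ still permit $v_2$ and $v_{2k-1}$, so you have not yet excluded a $(k+1)$-th vertex. Fix it by noting that the rest of the stable set then lies in the path $v_2\cdots v_{2k-1}$, which is covered by the $k-1$ pairs $\{v_2,v_3\},\dots,\{v_{2k-2},v_{2k-1}\}$.
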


\begin{proof}
	The first assertion is the standard calculation for an odd cycle of length at least five.	
	Assume now that $A$ is a proper induced subgraph.
	Then $A$ is a forest, indeed a disjoint union of paths and isolated vertices.
	If $A$ has no edge, then $\omega(A)=1$ and $\alpha(A)=|V(A)|$, so $|V(A)|=\alpha(A)\omega(A)$.
	If $A$ has an edge, then $\omega(A)=2$.
	Since every forest is bipartite, one side of a bipartition has size at least $|V(A)|/2$, and hence $\alpha(A)\ge |V(A)|/2$.
	Therefore $\alpha(A)\omega(A)\ge |V(A)|$.
	Finally, every independent set of $A$ is also an independent set of $C_{2k+1}$, whose independence number is $k$.
	Hence $\alpha(A)\le k$.
\end{proof}

We now construct the graphs that will serve as our counterexamples.

\begin{proof}[\bf Proof of Theorem~\ref{T2}.]
	For \(m=1\), the statement is trivial (take \(G_1=C_5\), for which \(\chi(G_1)-\omega(G_1)=3-2=1\)).
	So we assume \(m\ge2\).
	For $1\le i\le m$, put $k_i=i+1$, and define
	\[
	G_m=C_{2k_1+1}+C_{2k_2+1}+\cdots+C_{2k_m+1}.
	\]
	Equivalently,
	\[
	G_m=C_5+C_7+\cdots+C_{2m+3}.
	\]
	By Lemma~\ref{lem:join-parameters}, and since each odd cycle $C_{2k_i+1}$ has chromatic number $3$ and clique number $2$, we have
	\[
	\chi(G_m)=3m,
	\qquad
	\omega(G_m)=2m.
	\]
	Thus $\chi(G_m)-\omega(G_m)=m$.
	
	It remains to prove that $G_m$ is almost perfect.
	Let $H$ be an arbitrary
	induced subgraph of $G_m$. By Lemma~\ref{lem:join-parameters}, there are
	induced subgraphs $A_i\subseteq C_{2k_i+1}$ such that
	\[
	H=A_1+\cdots+A_m.
	\]
	Write
	\[
	a_i=\alpha(A_i),\qquad b_i=\omega(A_i),\qquad n_i=|V(A_i)|,
	\]
	and set
	\[
	M=\max \{a_1,a_2,\ldots,a_m\}.
	\]
	Then, again by Lemma~\ref{lem:join-parameters},
	\[
	\alpha(H)=M,
	\qquad
	\omega(H)=\sum_{i=1}^m b_i,
	\qquad
	|V(H)|=\sum_{i=1}^m n_i.
	\]
	
	Let
	\[
	S=\{i: A_i=C_{2k_i+1}\}
	\]
	be the set of indices for which the whole odd cycle is present. By
	Lemma~\ref{lem:odd-cycle-induced}, for $i\notin S$ we have
	$n_i\le a_i b_i$, while for $i\in S$ we have
	$n_i=a_i b_i+1$. Hence
	\begin{equation}
		|V(H)|=\sum_{i=1}^m n_i
		\le \sum_{i=1}^m a_i b_i+|S|.
		\label{eq3.1}
	\end{equation}
	On the other hand,
	\[
	\begin{aligned}
		\alpha(H)\omega(H)+1
		&=M\sum_{i=1}^m b_i+1 \\
		&=\sum_{i=1}^m a_i b_i+
		\sum_{i=1}^m (M-a_i)b_i+1.
	\end{aligned}
	\]
	Therefore, by \eqref{eq3.1}, it suffices to prove the following inequality
	\begin{equation}
		\sum_{i=1}^m (M-a_i)b_i+1\ge |S|.
		\label{eq3.2}
	\end{equation}
	All terms in the sum are nonnegative. For $i\in S$, Lemma~\ref{lem:odd-cycle-induced}
	gives $a_i=k_i$ and $b_i=2$. Thus
	\begin{equation}
		\sum_{i=1}^m (M-a_i)b_i+1
		\ge 2\sum_{i\in S}(M-k_i)+1.
		\label{eq3.3}
	\end{equation}
	Let $r=|S|$. If $r=0$, the desired inequality~\eqref{eq3.2} is immediate. Suppose
	$r\ge 1$, and put
	\[
	K=\max_{i\in S} k_i.
	\]
	Since $M\ge K$ and the integers $k_i$ are pairwise distinct, the numbers
	$K-k_i$ for $i\in S$ are $r$ distinct nonnegative integers, one of which
	is $0$. Hence
	\[
	\sum_{i\in S}(M-k_i)
	\ge \sum_{i\in S}(K-k_i)
	\ge 0+1+\cdots+(r-1).
	\]
	Consequently,
	\[
	2\sum_{i\in S}(M-k_i)+1
	\ge 2(0+1+\cdots+(r-1))+1
	=r(r-1)+1
	\ge r=|S|.
	\]
	Thus, it follows from~\eqref{eq3.3} that
	\[
	\alpha(H)\omega(H)+1\ge |V(H)|.
	\]
	Since $H$ was arbitrary, $G_m$ is almost perfect.
	
	Now let $c$ be any constant.
	Choose an integer $m>c$.
	Then the almost
	perfect graph $G_m$ satisfies
	\[
	\chi(G_m)=\omega(G_m)+m>\omega(G_m)+c.
	\]
	Therefore no fixed constant $c$ can bound the additive gap
	$\chi(G)-\omega(G)$ over all almost perfect graphs.
	This completes the proof of Theorem~\ref{T2}.
\end{proof}

\vspace{6mm}

\n{\bf Acknowledgements:} 
We thank Dr. Hongzhang Chen for his helpful discussions and for bringing Refs. \cite{Ajtai1980,Shearer1983} to our attention.
The research is partially supported by the Foundation for Cultivated Young Talents of Fujian Province, China (Grant No. 2026350294), by the Natural Science Foundation of Fujian Province, China (Grant No. 2026J001968), by the Youth Foundation of Fujian Province (Grant No. JZ240035), by the Minnan Normal University Foundation (Grant No. KJ2023002), and by the Fujian Key Laboratory of Granular Computing and Applications (Minnan Normal University), the Institute of Meteorological Big Data-Digital Fujian, and the Fujian Key Laboratory of Data Science and Statistics.

\section*{Declarations}

%

\subsection*{Data availability}
Data sharing is not applicable to this article as no datasets were generated or analysed during the current study.

\subsection*{Conflict of interest}
The authors declare that they have no known competing financial interests or personal relationships that could have appeared to influence the work reported in this paper.


\begin{thebibliography}{99}


\bibitem{Ajtai1980}
M. Ajtai, J. Koml\'os, and E. Szemer\'edi,
A note on Ramsey numbers,
\textit{J. Combin. Theory Ser. A} \textbf{29} (1980), 354--360.

\bibitem{Berge1958}
C. Berge,
Sur le couplage maximum d'un graphe,
\textit{C. R. Acad. Sci. Paris} \textbf{247} (1958), 258--259.

\bibitem{BrianskiDaviesWalczak2024}
M. Bria\'nski, J. Davies, and B. Walczak,
Separating polynomial \(\chi\)-boundedness from \(\chi\)-boundedness,
\textit{Combinatorica} \textbf{44} (2024), 1--8.

\bibitem{chudnovsky2006}
M. Chudnovsky, N. Robertson, P. Seymour, and R. Thomas,
The strong perfect graph theorem,
\textit{Ann. of Math. (2)} \textbf{164} (2006), 51--229.

\bibitem{Erdos1959}
P. Erd\H{o}s,
Graph theory and probability,
\textit{Canad. J. Math.} \textbf{11} (1959), 34--38.

\bibitem{Esperet2017}
L. Esperet,
Graph colorings, flows and perfect matchings,
Habilitation Thesis, Universit\'e Grenoble Alpes, 2017.

\bibitem{GaspersHuang2019}
S. Gaspers and S. Huang,
$(2P_2,K_4)$-free graphs are 4-colorable,
\textit{SIAM J. Discrete Math.} \textbf{33} (2019), 1095--1120.

\bibitem{Gyarfas1975}
A. Gy\'arfas,
On Ramsey covering-numbers,
in: \textit{Infinite and Finite Sets (Colloq., Keszthely, 1973; Dedicated to P. Erd\H{o}s on His 60th Birthday)}, vol. II, Colloq. Math. Soc. J\'anos Bolyai \textbf{10} (1975), 801--816.


\bibitem{Gyarfas2023}
A. Gy\'arf\'as,
Problems close to my heart. With a preface by G. S\'ark\H{o}zy,
\textit{European J. Combin.} \textbf{111} (2023), 103695.



\bibitem{lovasz1972characterization}
L. Lov\'asz,
A characterization of perfect graphs,
\textit{J. Combin. Theory Ser. B} \textbf{13} (1972), 95--98.

\bibitem{Mycielski1955}
J. Mycielski,
Sur le coloriage des graphs,
\textit{Colloq. Math.} \textbf{3} (1955), 161--162.

\bibitem{scott2020induced}
A. Scott and P. Seymour,
Induced subgraphs of graphs with large chromatic number. VII. Gy\'arf\'as's complementation conjecture,
\textit{J. Combin. Theory Ser. B} \textbf{142} (2020), 43--55.

\bibitem{Shearer1983}
J. B. Shearer,
A note on the independence number of triangle-free graphs,
\textit{Discrete Math.} \textbf{46} (1983), 83--87.

\bibitem{Sivaraman2020}
V. Sivaraman,
25 interesting problems on $\chi$,
slide presentation, New perspectives in coloring and structure,
Mar. 15--20, 2020.

\bibitem{Spencer1977}
J. Spencer,
Asymptotic lower bounds for Ramsey functions,
\textit{Discrete Math.} \textbf{20} (1977), 69--76.

\bibitem{Tutte1947}
W. T. Tutte,
The factorization of linear graphs,
\textit{J. London Math. Soc.} \textbf{22} (1947), 107--111.

\bibitem{Wagon1980}
S. Wagon,
A bound on the chromatic number of graphs without certain induced subgraphs,
\textit{J. Combin. Theory Ser. B} \textbf{29} (1980), 345--346.







	











\bibitem{Zykov1949}
A. A. Zykov,
On some properties of linear complexes,
\textit{Mat. Sb. N.S.} \textbf{24(66)} (1949), 163--188.
      
\end{thebibliography}
\end{document}